\def\p{\partial}
\newcommand{\esssup}{\mathop{\mathrm{ess\,sup}}} 
\newcommand{\essinf}{\mathop{\mathrm{ess\,inf}}}
\def\ttD{{\Delta}}
\def\ttN{{\nabla}}
\def\tg{{g}_{ij}}
\def\R{R_\abb}
\def\be{\begin{equation}}
\def\ee{\end{equation}}
\def\p{\partial}
\def\p{\partial}
\def\p{\partial}
\def\R{\Bbb R}
\newtheorem{thm}{Theorem}[section]
\newtheorem{lem}{Lemma}[section]
\newtheorem{prop}{Proposition}[section]
\newtheorem{cor}{Corollary}[section]
\theoremstyle{definition}
\theoremstyle{remark}
\newtheorem{rem}{Remark}[section]
\numberwithin{equation}{section}
\begin{document}

\title[Lagrangian mean curvature flow]
{Lagrangian Mean Curvature flow for entire Lipschitz graphs}

\author{Albert CHAU}
\address{Department of Mathematics\\
University of British Columbia\\
Vancouver, B.C., V6T 1Z2\\
Canada}
\email{chau@math.ubc.ca}

\author{Jingyi CHEN}
\email{jychen@math.ubc.ca}

\author{Weiyong He}
\email{whe@math.ubc.ca}
\thanks{2000 Mathematics Subject Classification.  Primary 53C44, 53A10.}
\thanks{The first two authors are partially supported by NSERC, and the third author is
partially supported by a PIMS postdoctoral fellowship.}
\date{\today}

\begin{abstract}
We consider the mean curvature flow of entire Lagrangian graphs with Lipschitz continuous initial data.  Assuming only a certain bound on the Lipschitz norm of an initial entire Lagrangian graph in $\R^{2n}$, we show that the parabolic equation \eqref{PMA} has a longtime solution which is smooth for all positive time and satisfies uniform estimates away from time $t=0$.  In particular, under the mean curvature flow \eqref{LMCF}  the graph immediately becomes smooth and the solution exists for all time such that the second fundamental form decays uniformly to $0$ on the graph as $t\to \infty$.   Our assumption on the Lipschitz norm is equivalent to the assumption that the underlying Larangian potential $u$ is uniformly convex with its Hessian bounded in $L^{\infty}$.  We apply this result to prove a Bernstein type theorem for translating solitons, namely that if such an entire Lagrangian graph is a smooth translating soliton, then it must be a flat plane.  We also prove convergence of the evolving graphs under additional conditions.  See Theorem \ref{entire} and Theorem \ref{compact} for details.   
\end{abstract}

\maketitle

\section{Introduction}

In this paper we consider the mean curvature flow of entire Lagrangian graphs with Lipschitz continuous initial data. In particular, we focus on existence of smooth solutions assuming only an $L^{\infty}$ bound on the Hessian of the underlying potential of an entire Lagrangian graph.  In the compact case, a shorttime smooth solution of mean curvature flow always exists if the initial submanifold is smooth or satisfies a certain smallness condition on local slope \cite{W1}. In the noncompact case, 
a remarkable result asserting existence of longtime smooth solutions of the mean curvature flow for codimension one entire graphs for any locally Lipschitz initial data, without any growth condition at infinity, is obtained in \cite{HE}. However, for higher codimensions one does not expect a shorttime smooth solution if the initial data has only bounded Lipschitz norm in view of the example of Lawson and Osserman \cite{LO}.

 Consider the fully nonlinear parabolic equation on $\R^n$:
 
\begin{equation}\label{PMA}
\left\{%
\begin{array}{ll}
 &\dfrac{du}{dt} = \dfrac{1}{\sqrt{-1}} \log \dfrac{\det (I_n+ \sqrt{-1}D^2u)}{\sqrt{\det (I_n+ (D^2u)^2)}}\\
 &u(x, 0)=u_0(x),
\end{array}%
\right.
 \end{equation} 
 where $I_n$ is the $n$ dimensional identity matrix. There exists a family of diffeomorphisms $r_t:\R^n \to \R^n$ such that $F(x, t)=(r_t(x), Du(r_t(x), t))\subset \R^{2n}$ is a solution to the mean curvature flow equation
 \begin{equation}\label{LMCF}
\left\{%
\begin{array}{ll}
 & \dfrac{dF}{dt} =  H\\
 &F(x, 0)=F_0(x)
\end{array}%
\right.
\end{equation}
where $H(x, t)$ is the mean curvature vector of the submanifold $F(x, t) \subset \mathbb{R}^{2n}$
at $F(x, t)$ (cf.  Lemma 2.1).

One of our main results in this paper is


\begin{thm}\label{entire}
Suppose that $u_0:\R^n \to \R$  is a function with $L^{\infty}$ Hessian satisfying 
\begin{equation}\label{hesscond}
-(1-\delta) I_n\leq \essinf D^2u_0\leq \esssup D^2u_0 \leq (1-\delta) I_n
\end{equation}
for any $\delta \in (0, 1)$.  Then
\eqref{PMA} has a longtime smooth solution $u(x, t)$ for all $t>0$ with initial condition $u_0$ such that the following estimates hold:

\begin{enumerate}
\item$-(1-\delta) I_n\leq D^2u\leq (1-\delta) I_n$  for all $t>0$,
\item$\sup_{x\in \R^n}|D^l u(x,t)|^2 \leq C_{l, \delta}/t^{l-2} $  for all $l\geq3$, and some $C_{l, \delta}$ depending only on $l$ and $\delta$.
\item $u(x, t)\in C^{\infty}(\R^n \times (0, \infty)) \cap C_{loc}^{1+\alpha, \beta}(\R^n \times [0, \infty))$ for any $0<\alpha, \beta <1$.
\end{enumerate}
If we assume in addition that $|D u_0(x)| \to 0$ as $|x|\to \infty$, then $\sup_{x\in \R^n}|D u(x,t)| \to 0$ as $t\to \infty$.  In particular, the graph $(x, Du(x, t))$ immediately becomes smooth and converges smoothly on compact sets to the coordinate plane $(x, 0)$ in $\R^{2n}$. 
\end{thm}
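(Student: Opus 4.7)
The equation \eqref{PMA} is the Lagrangian phase flow $u_t = \sum_{i=1}^n \arctan \lambda_i(D^2u)$, where the $\lambda_i$ are the eigenvalues of $D^2u$. Under \eqref{hesscond}, the linearized coefficient matrix $F^{ij}(D^2u) = [(I_n + (D^2u)^2)^{-1}]^{ij}$ has eigenvalues pinched in $[(1+(1-\delta)^2)^{-1},1]$, so \eqref{PMA} is uniformly parabolic with ellipticity constants depending only on $\delta$. The plan is to mollify $u_0$ to smooth initial data $u_0^k$ still satisfying \eqref{hesscond}, solve \eqref{PMA} with data $u_0^k$ to obtain smooth solutions $u_k$, derive a priori estimates depending only on $\delta$ (not on $k$), and pass to the limit.

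\textbf{A priori Hessian bound.} The cornerstone is the preservation of \eqref{hesscond} along the flow. Differentiating \eqref{PMA} twice in a direction $\xi \in \R^n$ yields
\begin{equation*}
\partial_t (u_{\xi\xi}) = F^{ij}(D^2u)\, \partial_i\partial_j (u_{\xi\xi}) + F^{ij,kl}(D^2u)\, u_{ij\xi}\, u_{kl\xi}.
\end{equation*}
At a spatial maximum of the top eigenvalue of $D^2u$, choosing $\xi$ aligned with the corresponding eigenvector and analyzing the second-variation term via the explicit form of $F=\sum\arctan \lambda_i$, one applies the parabolic maximum principle to conclude that $\lambda_{\max}(D^2u)$ cannot exceed its initial supremum. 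The matching lower bound on $\lambda_{\min}(D^2u)$ follows from the symmetry of \eqref{PMA} under $u \mapsto -u$, which sends $\lambda_i \mapsto -\lambda_i$ and preserves the equation.

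\textbf{Smoothing and derivative decay.} With \eqref{hesscond} preserved, the equation is uniformly parabolic with smooth structure, so Krylov-Safonov and Evans-Krylov type interior estimates (applied to the equation for $\partial_\xi u$) followed by the Schauder bootstrap give $u_k \in C^\infty(\R^n \times (0,\infty))$ with estimates depending only on $\delta$. The time-weighted estimates $\sup_x |D^l u|^2 \leq C_{l,\delta}/t^{l-2}$ for $l \geq 3$ are obtained by a Bernstein-type induction: one sets $\phi_l = t^{l-2}|D^l u|^2 + \sum_{j<l} A_{l,j}\, t^{j-2}|D^j u|^2$ with suitable constants $A_{l,j}$, shows $\phi_l$ satisfies a parabolic differential inequality whose right-hand side is controlled by the already-bounded lower-order quantities, and concludes via the maximum principle. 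Long-time existence of $u_k$ then follows since these bounds preclude finite-time blow-up, and an Arzel\`a-Ascoli extraction on compact space-time subsets yields the limit $u$ satisfying (1)-(3). The $C^{1+\alpha,\beta}_{loc}$ regularity up to $t=0$ is a direct consequence of the uniform Lipschitz control on $u$ together with the $L^\infty$ bound on $u_t = \sum \arctan \lambda_i(D^2u)$.

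\textbf{Large-time behavior and main obstacle.} Under the additional hypothesis $|Du_0| \to 0$ at infinity, each component $\partial_k u$ satisfies a linear uniformly parabolic equation with bounded coefficients. The parabolic maximum principle combined with decay of the associated fundamental solution (or, alternatively, a parabolic rescaling and subsequential-limit argument leveraging the Hessian bound) gives $\sup_x |Du(x,t)| \to 0$ as $t \to \infty$, and the interior estimates (2) upgrade this to smooth convergence on compact sets of $(x, Du(x,t))$ to $(x, 0) \subset \R^{2n}$. The principal obstacle throughout the proof is the Hessian preservation step: since $F = \sum \arctan \lambda_i$ is neither globally concave nor globally convex on $\{|\lambda_i| \leq 1-\delta\}$ when $n \geq 3$, the second-variation term above lacks a definite sign a priori, and isolating the precise Lagrangian-phase structure--together with the $u \mapsto -u$ symmetry of both the equation and \eqref{hesscond}--that allows the maximum principle argument to go through is the central technical issue.
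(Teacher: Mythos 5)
Your overall skeleton (mollify, obtain uniform a priori estimates independent of the mollification parameter, pass to a limit) agrees with the paper's, but the two steps on which everything hinges are unresolved or broken in your proposal, and the paper's actual mechanism for handling them is quite different from what you suggest.

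\textbf{Hessian preservation.} You correctly identify that applying the scalar maximum principle directly to $\lambda_{\max}(D^2u)$ fails because the second-variation term $F^{ij,kl}u_{ij\xi}u_{kl\xi}$ has no sign on the region $\{|\lambda_i|\leq 1-\delta\}$; $F=\sum\arctan\lambda_i$ is neither concave nor convex there, and the $u\mapsto -u$ symmetry does not rescue the argument (it only shows the upper and lower bounds play symmetric roles, not that either is preserved). You flag this as the ``central technical issue'' but leave it open, which means the proposal does not actually prove the main point. The paper avoids the scalar maximum principle entirely: it works with Smoczyk's tensor $S_{ij}=\langle\overline{F}_i,F_j\rangle$ along the mean-curvature-flow parametrization, shows via Hamilton's tensor maximum principle (adapted to the noncompact setting with a spatial cutoff $\phi_R=1+|x/R|^2$ and a careful bounded-geometry hypothesis) that $S_{ij}-\epsilon g_{ij}\geq 0$ is preserved because the reaction term $2h_i^{km}h^n_{jk}S_{mn}$ is manifestly nonnegative at a null eigenvector of the tensor, and then translates this back into the bound $-(1-\delta)I_n\leq D^2u\leq(1-\delta)I_n$. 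This is a genuinely different, geometric mechanism, not a scalar eigenvalue argument.

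\textbf{Higher regularity.} Your route through Evans--Krylov is inapplicable: Evans--Krylov requires concavity (or convexity) of $F$ in $D^2u$, which again fails here. Krylov--Safonov applied to $\partial_\xi u$ only yields $C^\alpha$ for $Du$, which is already known from the $L^\infty$ Hessian bound; it does not give $C^\alpha$ for $D^2u$. The paper instead establishes the crucial mean curvature decay $|H|^2\leq C/t$ (Lemma 3.2 and Corollary 3.1, via the tensor $S_{ij}-\epsilon_2 tH_iH_j$), and then gets a uniform bound on $D^3u$ by a parabolic blow-up: if $|D^3u|$ blew up, after rescaling one would obtain a nonflat entire special Lagrangian graph with $-I_n\leq D^2u_\infty\leq I_n$, contradicting Yuan's Bernstein theorem. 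Your Bernstein-type induction $\phi_l=t^{l-2}|D^lu|^2+\cdots$ is the right heuristic shape, but without the concavity structure or the $|H|$ decay as input, the required differential inequality is not available; the paper obtains the decay estimates $\sup|D^lu|^2\leq C_{l,\delta}/t^{l-2}$ by blow-down arguments that again invoke Yuan's theorem in the limit. In short: the two missing ingredients in your proposal --- the tensor maximum principle for $S_{ij}$ and Yuan's Bernstein theorem applied to blow-up/blow-down limits --- are precisely what the paper supplies, and there is no known substitute via convexity-based fully nonlinear theory in this range of Hessian.
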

\begin{rem}\label{ir1}
After a change of coordinates described in \cite{Y}, (1.3) can be restated as: the Hessian of $u$ has positive lower and upper bounds almost everywhere.  
\end{rem}

 As an application of Theorem \ref{entire} we prove the following Bernstein type theorem for entire graphical translating solitons to the mean curvature flow (see Section 2 for definitions).   The theorem can be compared to Yuan's  Bernstein theorem in \cite{Y}  for entire Lagrangian graphs which states that   a minimal graph $(x, Du_0(x)) \subset \R^{2n}$ satisfying $-I_n\leq D^2u\leq I_n$ must be a flat plane.  
   
  \begin{thm}\label{bernstein}
 Suppose that $u_0:\R^n \to \R$ is smooth and satisfies \eqref{hesscond} for any $\delta \in (0, 1)$ and that the graph $(x, Du_0(x))$ is a Lagrangian translating soliton.  Then $(x, Du_0(x))$ must be a flat plane in $\R^{2n}$.
 \end{thm}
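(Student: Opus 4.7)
The plan is to express the translating soliton as an explicit global-in-time solution of \eqref{PMA} and then apply the interior decay estimate (2) of Theorem \ref{entire}, which forces the third (and hence all higher) derivatives of $u_0$ to vanish.

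First, if $M_0 = \{(x, Du_0(x))\}$ is a Lagrangian translating soliton with velocity vector $V = (V_1, V_2) \in \R^n \times \R^n$, then under \eqref{LMCF} the graph evolves rigidly as $M_t = M_0 + tV$. Writing $y = x + tV_1$, we have
\begin{equation*}
M_t = \{(y, \, Du_0(y - tV_1) + tV_2) : y \in \R^n\},
\end{equation*}
which remains an entire Lagrangian graph over $\R^n$. Its smooth potential is
\begin{equation*}
u(y, t) = u_0(y - tV_1) + tV_2 \cdot y + f(t),
\end{equation*}
where $f(t)$ is a quadratic polynomial in $t$ determined so that $u$ satisfies \eqref{PMA}; this is consistent with the soliton PDE $F(D^2 u_0)(y) = -V_1 \cdot Du_0(y) + V_2 \cdot y + c$ satisfied by the potential, where $F$ denotes the right-hand side of \eqref{PMA}.

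Crucially, $D^l u(y, t) = D^l u_0(y - tV_1)$ for every $l \geq 2$, so the bound \eqref{hesscond} is preserved for all $t > 0$ and the hypotheses of Theorem \ref{entire} are met. Theorem \ref{entire} produces a smooth longtime solution of \eqref{PMA} with initial data $u_0$; by uniqueness of smooth solutions to this uniformly parabolic (under \eqref{hesscond}) fully nonlinear equation, that solution must agree with our translating $u$. Applying the decay estimate (2) with $l = 3$ then gives
\begin{equation*}
\sup_{z \in \R^n} |D^3 u_0(z)|^2 = \sup_{y \in \R^n} |D^3 u(y, t)|^2 \leq \frac{C_{3, \delta}}{t}
\end{equation*}
for every $t > 0$. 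Letting $t \to \infty$ forces $D^3 u_0 \equiv 0$, whence $u_0$ is a quadratic polynomial and the graph $(x, Du_0(x))$ is an affine $n$-plane in $\R^{2n}$.

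The main technical obstacle is the identification of the explicit translating solution $u$ with the one produced by Theorem \ref{entire}, which relies on uniqueness of smooth solutions of \eqref{PMA} under the Hessian bound \eqref{hesscond}. If such a uniqueness statement is not directly available, one can instead inspect the proof of Theorem \ref{entire} and observe that the estimate (2) is obtained from interior a priori estimates (Evans--Krylov $C^{2, \alpha}$ bounds followed by a Schauder bootstrap) that apply to \emph{any} smooth solution of \eqref{PMA} satisfying \eqref{hesscond}; this route bypasses the uniqueness question entirely and yields the same conclusion.
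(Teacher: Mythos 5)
Your overall strategy---exploit the rigid translating structure so that $D^l u(\cdot,t)$ is a time-independent translate of $D^l u_0$, and then let $t\to\infty$ in the decay estimate $\sup|D^3 u(\cdot,t)|^2 \le C/t$---is exactly the idea behind the paper's Step~3. However, you skip two steps that the paper carries out first, and neither of your proposed ways of circumventing them is sound.

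The paper first proves (Steps~1 and~2) that the soliton potential $u_0$ automatically has \emph{all} of its derivatives $D^l u_0$ uniformly bounded on $\R^n$. Step~2 is the nontrivial heart of this: it uses Perelman-style point-picking together with a blow-up limit and Yuan's Bernstein theorem to show $|D^3 u_0|$ is bounded, and Step~1 bootstraps from there. Only then does the translating solution $u(y,t)=u_0(y-tV_1)+tV_2\cdot y+f(t)$ satisfy the ``bounded geometry'' hypothesis under which the a priori machinery of Sections~3--4 applies. The decay estimate (2) is \emph{not} a soft interior Evans--Krylov/Schauder consequence of the Hessian bound \eqref{hesscond} alone: it is obtained globally from the tensor maximum-principle estimate $|H|^2\le C/t$ (Lemma \ref{apel1}, Corollary \ref{apec1}) and a blow-up argument invoking Yuan's theorem (Lemma \ref{pmaapel3}, Lemma \ref{ltl1}), both of which are proved under the bounded geometry assumption. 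An interior Schauder/Evans--Krylov estimate---even if applicable to the non-concave operator $\sum_i\arctan\lambda_i$---would at best give a bound on $D^3 u$ for $t>0$, not the crucial \emph{decay in time} $C/t$, which is what forces $D^3u_0\equiv 0$. So your ``alternative route'' misattributes the mechanism behind (2) and does not yield the time decay.

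Your first route, via uniqueness, is also a genuine gap: the paper never proves uniqueness of solutions to \eqref{PMA} for Hessian-bounded data, and uniqueness for fully nonlinear parabolic equations on all of $\R^n$ without growth conditions is not automatic. Without it you cannot identify your explicit translating solution with the solution produced by Theorem \ref{entire} (which is built as a limit of approximating solutions $u^k$ and only a priori agrees with $u_0$ at $t=0$). If instead you first establish bounded geometry of $u_0$ (as the paper does), then you can apply the a priori estimates of Sections~3--4 \emph{directly} to the translating solution $u(y,t)$, sidestepping uniqueness entirely---which is precisely what the paper's Step~3 does, and which your proposal should incorporate to be complete.
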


Longtime existence and convergence results have been proven in \cite{Sm} and \cite{SmMt} for compact graphical Lagrangian submanifolds of $T^{2n}$, the standard $2n$ dimensional flat torus, with smooth convex initial potentials.   On the other hand, in \cite{HE2} it was shown that \eqref{LMCF} has a longtime smooth solution with curvature decay emerging from any entire hypersurface  in $\R^{n+1}$ with bounded Lipschitz constant and our results can be viewed as a Lagrangian version of this.  When the initial data has sufficiently small Lipschitz bound then the mean curvature flow admits a longtime graphical solution \cite{L}.

One of our key observations is that in the setting of Theorem \ref{entire}, the mean curvature $H$ controls the second fundamental form $A$ along the Lagrangian mean curvature flow where the Bernstein type theorem in  \cite{Y} for special Lagrangian entire graphs plays a crucial role.  In particular, we show that $|H|^2$ decays in time at a rate $C/t$ and thus so does $|A|^2$. This estimate ultimately provides the longtime existence in Theorem \ref{entire}, and allows us to handle initial data without any curvature assumptions.  More precisely, we make no assumption on the $C^{\alpha}$ norm of $D^2 u_0$. 
The strategy of our proof for Theorem \ref{entire} is as follows. In Section 3, we derive the a priori estimate $|H|^2\leq C/t$ for smooth solutions of the mean curvature flow (\ref{LMCF}) under the assumption that the geometry of the graph at each $t\geq 0$ is bounded. This is done by adapting Hamilton's maximum principle for tensors \cite{H} to the noncompact situation. In Section 4 we deal with (\ref{PMA}) for smooth initial potential functions under the assumption that the spacial derivatives $D^l u$ are uniformly bounded in space for every time for $l\geq 2$.  We also show that the initial condition \eqref{hesscond} is preserved under the evolution.  Then applying Yuan's Bernstein theorem \cite{Y} to a blow-up limit, which is minimal if $|H|<C$ along the flow, we obtain the curvature estimate: $D^3 u$ is uniformly bounded along the flow.  From this we then work out the higher derivative estimates on $u$ directly. In Section 5, we first 
 construct a sequence of smooth functions $u_0^k$ approximating $u_0$ such that $D^2u_0^k$ satisfies the desired bounds for all $k$.  We then prove a general shorttime existence result for the graphical mean curvature flow equation \eqref{GMCF} in Proposition \ref{ap1}, and we use this and our a priori estimates to show that for each approximting function $u_0^k$, there exists a smooth longtime solution satisfying the bounded geometry conditions.  Finally the estimates obtained will in fact allow us to extract a convergent subsequence of solutions which converges to a longtime solution to \eqref{PMA}  which is smooth for $t>0$ and converges to $u_0$ as $t\to 0$.  Note that an entire uniformly bounded longtime smooth solution to a parabolic equation may fail to converge as $t\to\infty$ even for the standard heat equation.  Using a theorem of Il'in \cite{I}, we have that if $|Du_0|\to 0$ as $|x|\to\infty$ in addition to \eqref{hesscond}, then the evolving graph $(x,Du(x, t))$ in Theorem \ref{entire} converges to the coordinate plane $(x,0)$ as $t\to \infty$.  We prove Theorem \ref{entire} and Theorem \ref{compact} in Section 6 and we prove Theorem \ref{bernstein} in Section 7.

A particular case of Theorem \ref{entire} is when $Du_0:\R^n \to \R^n$ is a lift of a map $f:T^n \to T^n$,  where $T^n$ is the standard $n$-dimensional flat torus.  In this case we get convergence as in the following 
\begin{thm}\label{compact}
Let $u_0:\R^n \to \R$ satisfy 
\eqref{hesscond} for any $\delta\in(0,1)$ such that  $Du_0:\R^n \to \R^n$ is a lift of a map $f:T^n \to T^n$.  Then (1)-(3) in Theorem \ref{entire} hold.   Moreover, the graph $(x, Du(x, t))$  immediately becomes smooth after initial time and converges smoothly to a flat plane in $\R^{2n}$. 
\end{thm}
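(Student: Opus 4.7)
Items (1)--(3) are immediate from Theorem \ref{entire} applied to $u_0$, since the hypothesis on $u_0$ is identical in the two theorems. The new content is smooth convergence of the graph to a flat Lagrangian plane, and the plan is to propagate the torus symmetry of $u_0$ along the flow and combine it with the decay estimates already available.

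First I would transfer the periodicity. Since $Du_0$ lifts a map $f:T^n\to T^n$, for each standard basis vector $e_i$ the continuous $\mathbb{Z}^n$-valued function $Du_0(x+e_i)-Du_0(x)$ is constant, equal to some $v_i\in\mathbb{Z}^n$; writing $u_0(x+e_i)-u_0(x)=\langle v_i,x\rangle+c_i$ and imposing commutativity $u_0(x+e_i+e_j)=u_0(x+e_j+e_i)$ forces $v_i^j=v_j^i$, so the matrix $V$ with columns $v_i$ is symmetric. The right hand side of \eqref{PMA} depends only on $D^2u$, so adding an affine function to any solution produces another solution. Consequently $\tilde u(x,t)=u(x+e_i,t)-\langle v_i,x\rangle-c_i$ solves \eqref{PMA} with the same initial data as $u$, and uniqueness of smooth solutions in the class produced by Theorem \ref{entire} (coming from the uniform $L^\infty$ Hessian bound and parabolic comparison for the concave fully nonlinear operator on the right of \eqref{PMA}) gives $\tilde u\equiv u$. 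Differentiating twice, $D^2u(x+e_i,t)=D^2u(x,t)$ for all $t\geq 0$, so $\mathbb{Z}^n$-periodicity of $D^2u$ is preserved by the flow.

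Next I would sharpen the estimates of Theorem \ref{entire}(2) using the periodicity. Every $D^lu$ with $l\geq 2$ is now $\mathbb{Z}^n$-periodic, and the bound $|D^lu|^2\leq C_{l,\delta}/t^{l-2}$ is uniform in $x$; in particular for each $l\geq 3$ one has $\sup_{\R^n}|D^lu(\cdot,t)|\to 0$ as $t\to\infty$. The oscillation of $D^2u(\cdot,t)$ over one fundamental domain therefore decays to zero, and an Arzel\`a--Ascoli argument yields a $C^\infty_{loc}$ subsequential limit graph of the form $(x,A_\infty x+c_\infty)$ with $A_\infty$ a symmetric constant matrix, forced to equal $V$ by the preserved identity $Du(x+e_i,t)-Du(x,t)=v_i$.

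The main obstacle is upgrading this to a genuine $t\to\infty$ limit, and the plan is to exploit the Lagrangian phase $\theta=\sum_j\arctan\lambda_j(D^2u)=\partial_tu$, which is $\mathbb{Z}^n$-periodic and satisfies the linear heat equation $\partial_t\theta=\Delta_{g(t)}\theta$ along the evolving submanifold. The metric $g(t)$ is uniformly equivalent to the Euclidean metric by item (1), so on the compact fundamental torus standard parabolic estimates give exponential convergence of $\theta(\cdot,t)$ to a constant past some large time $T_0$. This translates into exponential decay of $|H|^2=|\nabla_{g(t)}\theta|^2$, which via $\partial_tF=H$ yields $\int_{T_0}^\infty|\partial_tF|\,dt<\infty$ uniformly in $x$, so $F(x,t)$ is Cauchy in $t$. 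This pins down a unique $c_\infty$ and gives smooth convergence of $(x,Du(x,t))$ to the flat Lagrangian plane $(x,Vx+c_\infty)\subset\R^{2n}$.
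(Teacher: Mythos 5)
Your proof is correct in outline but takes a genuinely different route from the paper. The paper's entire proof of Theorem \ref{compact} is a one-line appeal to L.~Simon's \L{}ojasiewicz--Simon convergence theorem \cite{Si}: once Theorem \ref{entire} supplies uniform bounds, curvature decay, and (via the torus condition) an a priori compact ambient picture, Simon's machinery for gradient flows of analytic functionals produces convergence (rather than just subsequential convergence) directly. Your route instead exploits the Lagrangian structure explicitly: you propagate periodicity of $D^2u$, note that the Lagrangian angle $\theta=\partial_t u=\sum_j\arctan\lambda_j$ satisfies the linear heat equation $\partial_t\theta=\tilde g^{ij}\theta_{ij}$, and extract exponential oscillation decay on the torus, which then gives exponential decay of $|H|=|\nabla\theta|$ and hence an integrable velocity. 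This is in spirit the Smoczyk--Wang argument for smooth periodic convex potentials, adapted to the rough initial data produced here, and it has the advantage of being self-contained and explicit about the rate; Simon's theorem buys generality (no use of the special Lagrangian phase) at the cost of an abstract black box.

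Two places in your write-up deserve more care. First, the uniqueness you invoke to conclude $u(x+e_i,t)-\langle v_i,x\rangle-c_i\equiv u(x,t)$ is never stated in the paper; the constructed solution is only ``a'' solution. The cleanest repair is to notice that the mollifications $u_0^k$ in Lemma \ref{ltl0.9} automatically inherit the affine periodicity of $u_0$, and that the short-time solution of Proposition \ref{ap1} is unique in $C^{2+\alpha,1+\alpha/2}$ by the implicit function theorem; alternatively, if you insist on a direct uniqueness argument, you should argue via the linear equation for the difference $w=u-\tilde u$ (which has uniformly bounded $D^2w$, hence at most quadratic growth, so maximum principle uniqueness on $\R^n$ applies) rather than via concavity of $\sum\arctan\lambda_i$, which fails on $(-1,1)$ and only holds after Yuan's change of coordinates. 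Second, ``standard parabolic estimates give exponential convergence of $\theta$'' on the torus needs a named ingredient: the Krylov--Safonov Harnack inequality (uniform in $t$ by the uniform ellipticity of $\tilde g^{ij}$ from item (1)) gives geometric oscillation decay over unit time steps, and then interior Schauder estimates (using the $|D^3u|$ bound to control the H\"older norm of $\tilde g^{ij}$ away from $t=0$) upgrade this to exponential decay of $|\nabla\theta|=|H|$. With these two points made precise your argument is complete.
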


By Remark \ref{ir1}, Theorem \ref{compact} generalizes the main result in \cite{SmMt} by allowing nonsmooth initial data; in particular, one does not need to assume the initial curvature is bounded:

\begin{cor}\label{compactentire}
Let $u_0:\R^n \to \R$ satisfying $0< \essinf D^2u_0<\infty$ such that  $Du_0:\R^n \to \R^n$ is the lift of a map $f:T^n \to T^n$.  Then the conclusion of Theorem \ref{compact} holds.  
\end{cor}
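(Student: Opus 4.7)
The plan is to reduce Corollary \ref{compactentire} to Theorem \ref{compact} via Yuan's change of coordinates (Remark \ref{ir1}).

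First I would promote the hypothesis to a two-sided bound on $D^2u_0$. Since $Du_0$ lifts a map $f\colon T^n\to T^n$, we have $Du_0(x+k)-Du_0(x)=Mk$ for all $k\in\mathbb{Z}^n$ and some fixed integer matrix $M$; differentiating yields that $D^2u_0$ is $\mathbb{Z}^n$-periodic a.e. The hypothesis that $\essinf D^2u_0$ is finite implicitly means $D^2u_0\in L^\infty_{\mathrm{loc}}$, and periodicity then upgrades this to $D^2u_0\in L^\infty(\R^n)$. Together with the positive lower bound, this gives
\[
0<\lambda_- I\le D^2u_0\le \lambda_+ I \quad\text{a.e. on }\R^n
\]
for some constants $0<\lambda_-\le\lambda_+<\infty$.

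Next I would invoke Remark \ref{ir1}: the change of coordinates described in \cite{Y} is a unitary rotation of $\R^{2n}=\C^n$ that sends the Lagrangian graph $(x,Du_0(x))$ to a new Lagrangian graph $(\tilde x,D\tilde u_0(\tilde x))$, with $\tilde u_0$ satisfying \eqref{hesscond} for some $\delta=\delta(\lambda_-,\lambda_+)\in(0,1)$. Because the rotation is an isometry of $\R^{2n}$, the rank-$n$ translational symmetry lattice $\{(k,Mk):k\in\mathbb{Z}^n\}$ of the original graph is carried to a rank-$n$ translational symmetry lattice of the new graph; a further linear change of coordinates on the base $\R^n$ brings this lattice to the standard $\mathbb{Z}^n$, so that $D\tilde u_0$ again lifts a map between standard flat tori. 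Consequently $\tilde u_0$ falls within the hypothesis of Theorem \ref{compact}.

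Theorem \ref{compact} then provides a longtime smooth solution of \eqref{PMA} starting from $\tilde u_0$, satisfying estimates (1)--(3) of Theorem \ref{entire} and with graphs $(\tilde x,D\tilde u(\tilde x,t))$ converging smoothly to a flat plane in $\R^{2n}$. Undoing the unitary rotation and the linear change of base transports these conclusions back to $u_0$, since linear isomorphisms of $\R^{2n}$ preserve smoothness and map flat planes to flat planes. The main technical obstacle is the intermediate bookkeeping: one must verify that Yuan's rotation and the subsequent lattice normalization can be combined without destroying the bound \eqref{hesscond}, so that Theorem \ref{compact} applies cleanly. Once this is in place, the corollary follows immediately.
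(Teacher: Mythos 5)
Your overall strategy — apply the rotation of Remark \ref{ir1} to convert the positive-definite Hessian hypothesis into \eqref{hesscond} and then invoke Theorem \ref{compact} — is the paper's intended route, and much of your bookkeeping is sound: the rotated surface is again a Lagrangian graph since $x\mapsto x+Du_0(x)$ has Jacobian $I_n+D^2u_0\geq(1+\lambda_-)I_n$ and is hence a diffeomorphism of $\R^n$, and the lattice $\{(k,Mk):k\in\mathbb{Z}^n\}$ is carried to $\{\frac{1}{\sqrt 2}\big((I_n+M)k,(M-I_n)k\big)\}$, whose base projection $\frac{1}{\sqrt 2}(I_n+M)\mathbb{Z}^n$ is full rank because $I_n+M>0$.

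The step that does not go through is the ``further linear change of coordinates on the base $\R^n$'' that is supposed to renormalize this lattice to $\mathbb{Z}^n$. Such a map $L$ acts on the graph as $(\tilde x,\tilde y)\mapsto(L\tilde x,L^{-T}\tilde y)$, which preserves the symplectic form but not the Euclidean metric on $\R^{2n}$ unless $L$ is orthogonal; $L=\sqrt 2(I_n+M)^{-1}$ is not orthogonal in general. Consequently this change of variables neither commutes with \eqref{LMCF}/\eqref{PMA} nor preserves \eqref{hesscond} (the Hessian bound becomes a bound relative to $L^{-T}L^{-1}$, not $I_n$). This is precisely the ``technical obstacle'' you flag at the end, but it is a genuine obstruction to your phrasing rather than bookkeeping to be filled in later. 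The fix is to drop the normalization entirely: since the rotation $R$ is a Euclidean isometry, it carries $T^{2n}=\R^{2n}/\mathbb{Z}^{2n}$ isometrically onto the flat torus $\R^{2n}/R\mathbb{Z}^{2n}$, so the rotated graph descends to a compact Lagrangian in a flat torus, and the proof of Theorem \ref{compact} --- the estimates of Theorem \ref{entire}, which use only \eqref{hesscond}, plus Simon's convergence theorem \cite{Si}, which requires only a closed ambient manifold --- applies there verbatim. A smaller point: $\essinf D^2u_0<\infty$ does not by itself imply $D^2u_0\in L^{\infty}_{\rm loc}$; the corollary's hypothesis should be read, consistently with the rest of the paper, as $u_0$ having $L^{\infty}$ Hessian together with $\essinf D^2u_0>0$, which is what Remark \ref{ir1} refers to.
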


\section{Preliminaries}
Let $(x_1 ,\cdot \cdot \cdot, x_n, y_1, \cdot \cdot \cdot, y_n)$ be global coordinates on $\mathbb{R}^{2n}=\mathbb{R}^n\times \mathbb{R}^n$ and define the complex structure tensor $J$ on $\R^{2n}$ in these coordinates by

 \[
J\frac{\partial }{\partial x_i}=\frac{\partial}{\partial y_i}, \hspace{12pt} J\frac{\partial }{\partial y_i}=-\frac{\partial}{\partial x_i}.
\]
We denote the standard Euclidean metric in the above coordinates on $\R^{2n}$ by $\langle\cdot, \cdot\rangle$ and the standard symplectic form $\omega$ by 

\[
\omega=\sum_{i=1}^n dx_i \wedge dy_i.
\]
 
Now let $F(x, t):\R^n \to \R^{2n}$ be a family of smooth immersions for $t \in  [0, T)$, and fix global coordinates $x_1 ,\cdot \cdot \cdot, x_n, y_1, \cdot \cdot \cdot, y_n$ on the target $\R^{2n}$ as above.  We will also use $x_1,..., x_n$ to denote the coordinates on the domain $\R^n$.   Adopting the above definitions and notations for the target $\R^{2n}$, we have the following time dependent tensors on $\R^n$ induced by $F(x, t)$: 

\begin{enumerate}
\item $g_{ij} := \langle F_i,  F_j\rangle$ (the metric tensor)
\item $h_{ijk}:= -\omega( F_i, \nabla_j F_k)$ (the second fundamental form)
\item $H_i:={g}^{jk}h_{ijk}$ (the mean curvature form)
\end{enumerate}
where $F_i:= \partial F/\partial x_i$ and $\nabla$ is the covariant derivative on $\R^n$ with respect to the induced metric $g_{ij}$.  For any vector $V=(v_1 ,\cdot \cdot \cdot, v_n, \tilde  v_{1}, \cdot \cdot \cdot, \tilde  v_{n})$ on $\R^{2n}$ we define 
 $\overline{V}:= (v_1 ,\cdot \cdot \cdot, v_n, - \tilde v_{1}, \cdot \cdot \cdot, -\tilde  v_{n})$.  
 Define the tensor $S$ by
 \[
 S(V, W)=\langle \overline{ V}, W \rangle
 \]
 and consider the corresponding tensor on $\R^n$ induced by $F(x, t)$:
\[S_{ij}:=\langle\overline {F}_i, F_j\rangle.\]
The above tensor was introduced in \cite{Sm} and as in the case there, it will play a key role in our a priori estimates.  
 
An immersion $F:\R^n \to \R^{2n}$ is called Lagrangian provided $F^* \omega =0$  on $F(\R^n)$. It is not hard to show that given any function $u:\R^n \to \R$, the corresponding graph $(x, Du(x))\subset \R^{2n}$ is always Lagrangian and thus any family of functions $u(x, t):\R^n \to \R$ defines a family of Lagrangian graphs in $\R^{2n}$.   The following lemma from \cite{Sm} establishes the correspondence between solutions of \eqref{PMA} and \eqref{LMCF} sufficient for our purposes.
 
 \begin{lem}\label{pl1}
Let $u_0:\R^n \to \R$ be a smooth function.  Then \eqref{PMA} has a smooth solution on $\R^n \times [0, T)$ with initial condition $u(x, 0)=u_0$ if and only if  \eqref{LMCF} has a smooth solution $F(x, t)$ on $\R^n \times [0, T)$ with initial condition $F(x, 0):=(x, Du_0(x))$.  
 In particular, there exists a smooth family of diffeomorphisms $r(x, t):\R^n \to \R^n$ for $t\in [0, T)$ such that $F(x, t):=(r(x, t), Du(r(x, t), t))$ solves \eqref{LMCF} on  $\R^n \times [0, T)$.
\end{lem}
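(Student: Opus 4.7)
My plan is to recognize the right-hand side of \eqref{PMA} as the Lagrangian angle $\theta$ of the graph $(x,Du(x,t))$, invoke the classical Lagrangian identity $H=J\nabla^L\theta$, and then pass between \eqref{PMA} and \eqref{LMCF} by a time-dependent reparametrization of $\R^n$ that strips off the tangential component of the naive graphical evolution.

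Since $D^2u$ is symmetric one has $|\det(I_n+\ii D^2u)|^2=\det(I_n+(D^2u)^2)$, so the quantity
\[
\theta := \frac{1}{\ii}\log\frac{\det(I_n+\ii D^2u)}{\sqrt{\det(I_n+(D^2u)^2)}}
\]
is real-valued (in terms of the eigenvalues $\lambda_i$ of $D^2u$ it equals $\sum_i\arctan\lambda_i$) and is precisely the Lagrangian angle of the graph $L=\{(x,Du(x))\}\subset\R^{2n}$. A standard computation, using the Lagrangian condition and the flatness of the ambient connection, then gives $H=J\nabla^L\theta$ on $L$, where $\nabla^L$ denotes the gradient in the induced metric.

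For the forward direction, given a smooth solution $u$ of \eqref{PMA} I would take the naive parametrization $\tilde F(x,t):=(x,Du(x,t))$ and decompose its velocity $\partial_t\tilde F=(0,Du_t)$ relative to the tangent frame $T_i=\partial_{x_i}+u_{ij}\partial_{y_j}$ of $L$ and the normal frame $JT_i=-u_{ij}\partial_{x_j}+\partial_{y_i}$. With the induced metric $g_{ij}=\delta_{ij}+u_{ik}u_{jk}$, a direct linear-algebra computation shows that the normal part of $\partial_t\tilde F$ equals $g^{ij}(\partial_iu_t)\,JT_j=J\nabla^L u_t$, which by \eqref{PMA} is $J\nabla^L\theta=H$. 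The remaining tangential part has the form $V^iT_i$ for an explicit smooth vector field $V$ on $\R^n$, and I would eliminate it by taking $r(x,t)$ to be the flow of $-V$, i.e.\ by solving the ODE $\partial_tr(x,t)=-V(r(x,t),t)$ with $r(x,0)=x$; smoothness of $V$ and standard ODE theory yield a smooth family of diffeomorphisms $r_t:\R^n\to\R^n$, and by construction $F(x,t):=(r(x,t),Du(r(x,t),t))$ satisfies $\partial_tF=H$, which is \eqref{LMCF}.

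For the converse, \eqref{LMCF} preserves the Lagrangian condition by a theorem of Smoczyk, and the graphical property persists on some maximal subinterval by continuity; writing $F(x,t)=(r(x,t),s(x,t))$ with $r$ a diffeomorphism and using $F^*\omega=0$ to recover a potential $u$ with $s=Du\circ r$, I would run the computation above in reverse to obtain $u_t=\theta$, i.e.\ \eqref{PMA}. The main technical obstacle is global solvability of the ODE defining $r(x,t)$ on all of $\R^n\times[0,T)$: in the settings of interest here one has uniform control on the relevant derivatives of $u$, so $V$ is bounded and global existence is automatic, but in general one constructs $r$ locally and patches.
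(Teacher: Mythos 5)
The paper does not give its own proof of this lemma; it is quoted from Smoczyk \cite{Sm} (the sentence immediately preceding Lemma \ref{pl1} says as much). Your reconstruction is the standard argument and the core computation is right: writing $(0, Du_t)$ in the frame $T_i = \partial_{x_i} + u_{ij}\partial_{y_j}$, $JT_i = \partial_{y_i} - u_{ij}\partial_{x_j}$ forces the normal coefficient $b^i = \tilde g^{ij}\partial_j u_t$ and the tangential coefficient $a^j = u_{jk}b^k$, so the normal part of $\partial_t\tilde F$ is exactly $J\nabla^L u_t = J\nabla^L\theta = H$ under \eqref{PMA}, and pushing by the flow of $-V$ with $V^j=a^j$ kills the tangential drift. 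Recognizing the right-hand side of \eqref{PMA} as the Lagrangian angle and invoking $H=J\nabla^L\theta$ is precisely the mechanism.

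Two points deserve more care before this delivers the lemma exactly as stated. First, the completeness issue you flag is not cosmetic: the lemma only hypothesizes that $u_0$ and the solution $u$ are smooth, so $V$ could a priori grow and $\partial_t r=-V(r,t)$ could fail to be complete or fail to give a surjection of $\R^n$. What rescues matters is that every invocation of the lemma in the paper (Lemmas \ref{L-4-1} and \ref{ltl1}) carries the bounded-geometry hypothesis, which bounds $V$ on compact time intervals and makes the flow of $-V$ a global diffeomorphism; that should be stated as the hypothesis under which the ``in particular'' clause is being used. Second, your converse produces a solution of \eqref{PMA} only ``on some maximal subinterval,'' whereas the iff demands it on all of $[0,T)$. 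In the one place the paper actually uses the converse (Lemma \ref{ltl1}), the \eqref{LMCF} solution is produced from the nonparametric flow \eqref{GMCF} via Proposition \ref{ap1}, so it is graphical by construction and the step is genuinely immediate there; for the lemma in full generality you would need an additional open-and-closed argument (apply the forward direction plus uniqueness of \eqref{LMCF} solutions with bounded geometry to identify the reparametrized graph with the given $F$, then use smoothness of $F$ to push past the endpoint of the maximal graphical subinterval).
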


 A Lagrangian submanifold $L$ in $\mathbb{R}^{2n}=\mathbb{C}^n$ is called  {\it special Lagrangian} when its mean curvature is identically zero in which case it is a stationary solution of \eqref{LMCF}.  As in \cite{Harvey-Lawson}, an entire special Lagrangian graph $(x, Du(x))$ satisfies the equation
\begin{equation}\label{E-special-lagrangian}
F(D^2u)=\sum_i\arctan \lambda_i=\Theta
\end{equation}
for some constant $\Theta$, where $\lambda_i$s are eigenvalues of $D^2u$.   More generally, $L$ is called a {\it Lagrangian translating soliton} when $L_t=L+tT$ is a solution to the mean curvature flow for some constant {\it translating vector} \[T=(a_1, a_2, \cdots, a_n, b_1, b_2, \cdots, b_n).\]   An entire Lagrangian translating soliton graph $(x, Du(x))$ with translating vector $T$ satisfies the equation
\begin{equation}\label{E-soliton-1}
\sum_i\arctan \lambda_i+\sum_i a_i\frac{\partial u}{\partial x_i}-\sum_ib_ix_i=c,
\end{equation}
 for  some constant $c$.  We can derive \eqref{E-soliton-1} as follows.  In general, a translating soliton to the mean curvature flow is defined by the identity
$H\equiv T^{\bot}$, where $H$ is the mean curvature vector of the graph $(x, Du(x))$ and $T^{\bot}$ is the normal part of a constant vector $T$ along the submanifold.   On the other hand, on a Lagrangian submanifold $L$, the mean curvature vector $H$ is given by
\[
H=J\nabla \theta,
\]
where $\theta=\sum \arctan \lambda_i$ is the {\it Lagrangian angle}.  Thus when $L$ is a Lagrangian translating soliton we have
\begin{equation}\label{E-soliton-1.1}
(\nabla \theta, F_i) =(-J T^{\bot}, F_i).
\end{equation}

Now \eqref{E-soliton-1} then follows from  \eqref{E-soliton-1.1} and the fact that $(\nabla \theta, F_i)=\frac{\partial \theta}{\partial x_i}$ and $(-JT^\bot, F_i)=(-J T, F_i)=b_i-\sum_ka_ku_{ik}$.

\section{A priori estimates for (\ref{LMCF})}
In this section we establish some a priori estimates for smooth solutions to  (\ref{LMCF}). 
Assume that $F(x, t)$ is a smooth solution to \eqref{LMCF}  on $\R^{n}\times[0, T)$ for some $0<T\leq \infty$
such that  $F(\R^n, t)\subset \R^{2n}$ satisfies:
\begin{equation}\label{boundgeom}
\sup_{x\in{\mathbb R}^n} |\nabla^k A(x,t)|\leq C(t,k) <\infty
\end{equation}
for each $t<T$ and nonnegative integer $k$, where $C(t,k)$ is a positive constant which may go to infinity as $t$ tends to $T$ for a fixed $k$. We also assume that the pullback $F(\cdot, t)^{*}ds^2$ of the Euclidean metric $ds^2$ on $\R^{2n}$ is equivalent to the Euclidean metric $dx^2$ on $\R^n$ for any $t\in [0, T)$:
\begin{equation}\label{metric}
C_1(t)dx^2 \leq F(\cdot,t)^*ds^2 \leq C_2(t) dx^2
\end{equation}
for some positive constants $C_1(t),C_2(t)$ depending only on $t$.  Such $F(x, t)$, satisfying \eqref{boundgeom} and \eqref{metric}, is said to have {\it bounded geometry} for every $t\in [0, T)$.

Let $\ttD f := {g}^{ij} \ttN_{i}\ttN_{j}f$ for any function $f$ where $\ttN$ is the covariant derivative relative to $\tg$.  
We establish the following lemma which is a noncompact version of Lemma 3.1 in \cite{Sm}.

\begin{lem}\label{L-3-1}Let $F(x, t)$ be a smooth solution to \eqref{LMCF} and suppose that $F(x, t)$ has bounded geometry for each $t\in [0, T)$. For any given $\epsilon>0$, if $S_{ij}-\epsilon  g_{ij}\geq 0$ at $t=0$ then $S_{ij}-\epsilon g_{ij}\geq 0$ for all $t\in[0,T)$.
\end{lem}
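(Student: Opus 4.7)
The plan is to adapt the Hamilton--Smoczyk tensor maximum principle \cite{Sm} to the present noncompact setting by a spatial perturbation argument. Writing $M_{ij}:=S_{ij}-\epsilon g_{ij}$, the first step is to combine the LMCF evolutions $\partial_t g_{ij}=-2H^k h_{ijk}$ and the corresponding formula for $\partial_t S_{ij}$ (both given in \cite{Sm}) into a reaction--diffusion identity $(\partial_t-\Delta)M_{ij}=Q_{ij}$, and to verify the null eigenvector condition: whenever $M\geq 0$ and $Mv=0$ at a point, $Q_{ij}v^iv^j\geq 0$ there. In the compact case this immediately yields preservation; in our noncompact setting the obstacle is that the infimum of the least eigenvalue of $M/g$ over $\R^n$ need not be attained.

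To force attainment, I fix any $T'\in(0,T)$ on which the constants in \eqref{boundgeom} and \eqref{metric} are uniformly bounded, and for small $\delta>0$ and large $K>0$ to be chosen introduce the perturbed tensor
$$\tilde M^\delta_{ij} := M_{ij} + \delta e^{Kt}(1+|x|^2)g_{ij}.$$
Since \eqref{metric} forces $|g_{ij}|,|S_{ij}|\leq C$ and $g_{ij}\geq C_1\delta_{ij}$ uniformly on $\R^n\times[0,T']$, the eigenvalues of $M$ relative to $g$ are bounded below by a uniform $-C'$, so $\tilde M^\delta_{ij}$ is strictly positive wherever $\delta e^{Kt}(1+|x|^2)>C'$, in particular outside a bounded subset of $\R^n$ for each fixed $t\in[0,T']$. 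At $t=0$ we have $\tilde M^\delta_{ij}\geq \delta g_{ij}>0$ by hypothesis.

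Suppose for contradiction the lemma fails; then for small $\delta>0$ positivity of $\tilde M^\delta$ also fails, first at some time $t_0\in(0,T']$ and \emph{finite} point $x_0\in\R^n$ with null vector $v$. Extending $v$ to be parallel at $(x_0,t_0)$, the derivative tests give $(\partial_t-\Delta)(\tilde M^\delta_{ij}v^iv^j)\leq 0$. On the other hand, since $\nabla g\equiv 0$,
$$(\partial_t-\Delta)\tilde M^\delta_{ij} = Q_{ij} + \delta e^{Kt}\bigl[K(1+|x|^2)-\Delta_g(1+|x|^2)\bigr]g_{ij} - 2\delta e^{Kt}(1+|x|^2)H^k h_{ijk}.$$
The bounded-geometry bounds give $|\Delta_g(1+|x|^2)|\leq C(1+|x|)$ and $|H^k h_{ijk}|\leq C$, while continuity of $Q$ in $M$ (it is polynomial) together with the identity $Mv=-\delta e^{Kt_0}(1+|x_0|^2)gv$ yields $Q_{ij}v^iv^j\geq -C\delta e^{Kt_0}(1+|x_0|^2)g(v,v)$. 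Choosing $K$ sufficiently large, depending only on the uniform bounded-geometry constants on $[0,T']$, makes the positive term $K\delta e^{Kt}(1+|x|^2)g_{ij}$ dominate all negative contributions, yielding $(\partial_t-\Delta)(\tilde M^\delta_{ij}v^iv^j)>0$ at $(x_0,t_0)$, a contradiction. Letting $\delta\to 0$ proves the lemma on $[0,T']$, and $T'<T$ is arbitrary. The central difficulty is precisely the noncompactness: ensuring that the first failure of positivity of $\tilde M^\delta$ is attained at a finite $x_0$, for which both parts of the bounded-geometry hypothesis are indispensable; a secondary subtlety is that $v$ is a null vector of $\tilde M^\delta$ rather than of $M$, so Smoczyk's null-eigenvector condition must be extended by continuity of $Q$, with the resulting $O(\delta)$ error absorbed by the choice of $K$.
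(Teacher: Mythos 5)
Your proof is correct, and while it is in the same spirit as the paper's (Hamilton's tensor maximum principle localized via a perturbation that is strictly positive both at $t=0$ and near spatial infinity, with the perturbation sent to zero at the end), the technical route is genuinely different in a way worth noting. The paper perturbs \emph{multiplicatively} on $S$, considering $a_{ij}=e^{\delta t}\phi_R S_{ij}-(\epsilon-\mu)g_{ij}$ with $\phi_R=1+|x/R|^2$. Because $\phi_R$ multiplies $S$, the evolution of $a_{ij}$ produces cross terms $\nabla\phi_R\nabla S_{ij}$ and $\Delta\phi_R\, S_{ij}$, and to bound these as tensors the paper needs $S_{ij}$ itself to be comparable to $g_{ij}$ from below; this is why the proof is split into Step~1 (proved under the extra hypothesis $S\geq \tfrac{\epsilon}{2}g$) and Step~2 (a continuity/open--closed argument removing that hypothesis). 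You instead perturb \emph{additively} on $M_{ij}=S_{ij}-\epsilon g_{ij}$ by a term $\delta e^{Kt}(1+|x|^2)g_{ij}$. Since the perturbation is a multiple of $g$ and $\nabla g\equiv 0$, no $\nabla\phi\nabla S$ cross term appears at all; the Laplacian acts only on the scalar weight, which is controlled by the bounded-geometry assumption $|\Delta_g(1+|x|^2)|\leq C(1+|x|)$. Consequently you never need the intermediate hypothesis $S\geq\tfrac{\epsilon}{2}g$ and your one-step argument replaces the paper's two-step structure. The price you pay, which you correctly identify and handle, is that the null vector $v$ annihilates $\tilde M^\delta$ rather than $M$; setting $\eta:=\delta e^{Kt_0}(1+|x_0|^2)$ and $\hat M:=M+\eta g=\tilde M^\delta(x_0,t_0)$, one has $\hat M\geq 0$, $\hat Mv=0$ (since $t_0$ is the first failure time), and rewriting $Q$ in terms of $\hat M$ and $S=\hat M+(\epsilon-\eta)g$ shows that every potentially negative contribution is $O(\eta)\,|v|^2$ with constants depending only on the bounded-geometry bounds on $[0,T']$; these are then absorbed by taking $K$ large (independently of $\delta$). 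So your approach is correct and, in this respect, a streamlining of the published argument, although both proofs depend in the same essential way on the bounded-geometry hypothesis \eqref{boundgeom}--\eqref{metric}.

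One small caution for the write-up: when you invoke the time-first-failure argument, state explicitly that $\tilde M^\delta(\cdot,0)\geq\delta(1+|x|^2)g>0$ and that the uniform lower bound $M\geq -C'g$ coming from boundedness of $S$ and \eqref{metric} forces $\tilde M^\delta>0$ outside a compact set uniformly in $t\in[0,T']$; together with continuity this makes the infimum
$t_0=\inf\{t\in[0,T']:\tilde M^\delta(x,t)\ \text{has a nonpositive eigenvalue for some }x\}$
attained at an interior $(x_0,t_0)$ with $\tilde M^\delta(\cdot,t_0)\geq0$. You gesture at this but it is the one place where compactification genuinely matters, and it is precisely what \eqref{boundgeom}--\eqref{metric} buy you.
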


\begin{proof}
Recall the following formulas from \cite{Sm}:
\begin{equation}
\begin{split}
\frac{d}{dt} S_{ij} &= \ttD S_{ij} -R^{l}_i S_{lj} -R^l_j S_{li} +2h^{km}_i h^{n}_{jk} S_{mn}\\
\frac{d}{dt}\tg &= -2H^l h_{lij}
\end{split}
\end{equation}
where $h^i_{jk}:=g^{in}h_{njk}$, $h^{ij}_{k}:=g^{in}g^{mj}h_{mnk}$, $R_{i}^l:= g^{lj} R_{jl}$ and $R_{ij}=H^lh_{lij} -h^{mn}_i h_{mnj}$ is the Ricci curvature tensor of $g_{ij}$.  It follows that
\begin{equation*}
\left(\frac{\partial }{\partial t}-\ttD\right) \left(S_{ij}-\epsilon  g_{ij}\right)=-R^{l}_i \left(S_{lj}-\epsilon  g_{lj}\right) -R^l_j \left(S_{li}-\epsilon  g_{li}\right) +2h^{km}_i h^{n}_{jk} S_{mn}+2\epsilon h_{i}^{mn}h_{jmn}.
\end{equation*}

We will adopt Hamilton's maximum principle for tensors to the noncompact case.

\vspace{.2cm}

\noindent{\bf Step 1.}  We first show that $S_{ij}-\epsilon g_{ij}\geq 0$ in $[0,T)$ if $S_{ij}-\epsilon  g_{ij}/2\geq 0$ in $[0, T)$.  

\vspace{.2cm}

For constants $\delta,  R >0$ and $0<\mu<\epsilon$ to be determined, consider the function 
$$
\phi_R(x):= 1+ \left|\frac{x}{R}\right|^2
$$
and consider the symmetric tensor \[a_{ij}:=e^{\delta t} \phi_R S_{ij} - (\epsilon-\mu )  g_{ij}.\]
We compute
\begin{equation}\label{E-3-2}
\begin{split}
\frac{\partial}{\partial t} a_{ij}=&e^{\delta t}\phi_R \left(\ttD S_{ij}-R^{l}_i S_{lj} -R^l_j S_{li} +2h^{km}_i h^{n}_{jk} S_{mn}\right)\\
 & +2(\epsilon-\mu)H^lh_{lij}+\delta e^{\delta t}\phi_R S_{ij}\\
 =&\ttD a_{ij}- R^{l}_i a_{lj}-R^l_ja_{li}+2 (\epsilon- \mu ) h_i^{mn}h_{jmn}+2e^{\delta t}\phi_R h^{km}_i h^{n}_{jk} S_{mn}\\
 &+\delta e^{\delta t}\phi_R S_{ij}-e^{\delta t} \ttD (\phi_R) S_{ij} -2 e^{\delta t} \nabla\phi_R\nabla S_{ij}.
\end{split}
\end{equation}

Fix $0<T'<T$.  Now we want to prove that for any $\delta, \mu>0$ there exists a large number $R_0$ such that $a_{ij}>0$  for all $R\geq R_0$ in $[0, T^{'}]$. 
It is clear that $a_{ij}>0$ on $\R^n\times\{0\}$.  Moreover, since $S_{ij}-\epsilon  g_{ij}/2\geq 0$ in $[0, T)$,  there exists $r>0$ such that for $|x|\geq r$ we have $a_{ij}(x, t)>0$ for any $t\in [0, T^{'}]$.  Suppose now that $a_{ij}$ has a zero eigenvalue at $(x_0, t_0)\in \R^n\times [0, T']$.   Then we have  $(x_0, t_0)\in \overline{B_{r}(0)}\times (0, T^{'}]$, and we may further assume that  $t_0$ is the first such time:  $a_{ij}>0$ for any $t<t_0$.  Let $V=(V^1,...,V^n)\not=0$ be a null vector of $a_{ij}(x_0, t_0)$, namely $a_{ij}(x_0, t_0)V^j=0$. We extend $V$ locally as follows: at the time slice $t_0$ we parallel translate $V$ along radial geodesics in a normal neighborhood $U$ around $x_0$, then set $V(x,t)=V(x,t_0)$ for $x\in U$ and $t\leq t_0$. Then at the point $(x_0, t_0)$, we have
\begin{equation}\label{E-3-2.9}
\nabla V = \nabla^2 V =0
\end{equation}
and
\begin{equation}\label{E-3-3}
\frac{d}{dt} (a_{ij} V^iV^j)\leq0 \hspace{12pt} \mbox{and} \hspace{12pt} \ttD (a_{ij} V^iV^j)\geq0.
\end{equation}
Straightforward computation leads to 
\begin{equation}
\begin{split}
\nabla_i \phi_R&= g^{ik}\frac{2 x_k}{R^2}\\
\ttD \phi_R&= \frac{2}{R^2}\sum_kg^{kk}+ \frac{2}{R^2\sqrt{g}}\partial_i(g^{ik}\sqrt{g}) x_k\\
\end{split}
\end{equation}
By assumption (\ref{metric}) $g_{ij}$ is uniformly equivalent to the Euclidean metric on $\R^n$ in $[0, T']$ up to a constant depending only on $T'$.  As noted in \cite{Sm},  $S_{ij}$ is the pullback of a constant coefficient tensor $S$ on $\R^{2n}$ by $F(x, t)$.  It follows by the bounded geometry assumption on $F(x, t)$ that $S_{ij}$ and $\nabla S_{ij}$ are uniformly bounded on $\R^n\times [0, T']$ by a constant depending only on $T'$. Thus at $(x_0,t_0)$
we have 
\begin{equation}\label{laplace}
\begin{split}
-c(T')\frac{|x_0|}{R^2} &S_{ij}\leq \nabla\phi_R\nabla S_{ij} \leq  c(T')\frac{|x_0|}{R^2} S_{ij}\\
&\left|\ttD \phi_R\right| \leq c(T')\left(\frac{1}{R^2}+\frac{|x_0|}{R^2}\right)\\
\end{split}
\end{equation}
where $c(T')$ is a constant depending only on $T'$ and we have used the assumption that $S_{ij}\geq \epsilon g_{ij}/2$. 
Now choose sufficiently large $R_0$  depending on $T'$ and $\delta$ such that the following real quadratic in $y$ is positive for any $y$: $$\frac{\delta}{2} y^2 -\frac{3c(T')}{R_0} y +\left(\frac{\delta}{2}-\frac{c(T')}{R_0^2}\right)>0.$$  For any $R\geq R_0$, the following quadratic in $y$ is also positive
 \[
 \frac{\delta}{2} y^2 -\frac{3c(T')}{R} y +\left(\frac{\delta}{2}-\frac{c(T')}{R^2}\right)>0.
 \]
  Then we compute
\begin{equation}\label{E-3-new}
\begin{split}
&\delta e^{\delta t}\phi_R S_{ij}-e^{\delta t} \ttD (\phi_R) S_{ij} -2 e^{\delta t} \nabla\phi_R\nabla S_{ij}\\
&\geq e^{\delta t}S_{ij}\left(\delta +\delta \frac{|x_0|^2}{R^2}-3\frac{c(T^{'})|x_0|}{R^2}-\frac{c(T^{'})}{R^2}\right)\\
&\geq e^{\delta t}S_{ij}\left[\frac{\delta}{2}\left(\frac{|x_0|}{R}\right)^2-3\frac{c(T^{'})}{R}\left(\frac{|x_0|}{R}\right)+\frac{\delta}{2}-\frac{c(T^{'})}{R^2}\right]+\frac{\delta}{2}e^{\delta t}S_{ij}\\
&>\frac{\delta}{2}e^{\delta t}S_{ij}.
\end{split}
\end{equation}
Now denote \[N_{ij}=
\frac{\partial}{\partial t} a_{ij}-\ttD a_{ij}.
\]
Then for any  $R\geq R_0$ and $\mu<\epsilon$,  
by \eqref{E-3-2}, \eqref{E-3-new} and the fact that $V$ is parallel around $x_0$ when $t=t_0$, the following holds at $(x_0, t_0)$
\begin{equation}
\begin{split}
N_{ij}V^iV^j=&2(\epsilon-\mu)h_i^{mn}h_{jmn}V^iV^j+2e^{\delta t}\phi_R h^{km}_i h^{n}_{jk} S_{mn}V^iV^j\\
&+\left(\delta e^{\delta t}\phi_R S_{ij}-e^{\delta t} \ttD (\phi_R) S_{ij}  -2 e^{\delta t} \nabla\phi_R\nabla S_{ij}V^iV^j\right)\\
> & \frac{1}{2}\delta e^{\delta t}S_{ij}V^iV^j\\
>&0.\\
\end{split}
\end{equation}
But this contradicts \eqref{E-3-3}. It follows that $a_{ij}>0$ in $[0, T^{'}]$. Now let  $R\rightarrow \infty$ first, 
then $\mu\rightarrow 0$, and finally $\delta\rightarrow 0$, we get that
\[
S_{ij}-\epsilon  g_{ij}\geq 0.
\]
Since this holds for any $T^{'}$, we have proved that $S_{ij}-\epsilon {g}_{ij}\geq 0$ holds in $[0, T)$ under the assumption $S_{ij}-\epsilon g_{ij}/2\geq 0$ in $[0,T)$.

\vspace{.2cm}

\noindent{\bf Step 2.} We now remove the assumption $S_{ij}-\epsilon { g}_{ij}/2\geq 0$ in $[0, T)$ in Step 1. 

\vspace{.2cm}

First note that  at $t=0$
 $$
 S_{ij}-\frac{\epsilon}{2}g_{ij}\geq\frac{\epsilon}{2} {g}_{ij}\geq C\epsilon\, \delta_{ij}
 $$ 
 for some constant $C>0$. Also by the bounded geometry assumption on $F(x, t)$ we know that for any $t\in [0, T)$ 
$$
\left|\frac{d}{dt} S_{ij}(x,t)\right|\leq C(t)
$$ 
where $C(t)$ is a constant depending only $t$. It follows that there is a maximal positive time $T_0$, such that 
$S_{ij}-\epsilon{g}_{ij}/2>0$ holds in $[0, T_0)$.  
Then by the maximum principle argument above, we know that $S_{ij}-\epsilon {g}_{ij} \geq 0$ 
 in $[0, T_0)$.  If $T_0\neq T$, by continuity, we know that $S_{ij}-\epsilon {g}_{ij}\geq 0$ in $[0, T_0]$ and we can then find some positive $T_0'$ such that $S_{ij}-\epsilon {g}_{ij}/2$ holds in $[T_0, T_0+T_0')\subset[T_0,T)$.  But this contradicts the choice of $T_0$. So $T_0=T$.  \end{proof}

In the compact case, it is proved in \cite{Sm} that $|H|\leq C$ is preserved along the mean curvature flow \eqref{LMCF} by considering the tensor $S_{ij}-\epsilon H_iH_j$.  Inspired by \cite{Sm}, we use $S_{ij}-\epsilon t H_iH_j$ to obtain the global decay estimate on the mean curvature in time: $|H|^2 \leq Ct^{-1}$.  These estimates hold in both the compact and noncompact case.

\begin{lem}\label{apel1}Let $F(x, t)$ be a smooth solution of \eqref{LMCF}  having bounded geometry for each $t\in [0, T)$. 
Suppose $S_{ij}-\epsilon_1 \tg \geq 0$ on $\R^{n}\times[0, T)$ for some $\epsilon_1>0$.  Then there exists a constant $\epsilon_2 >0$ depending only $\epsilon_1$ such that
\[S_{ij} -\epsilon_2 t H_i H_j \geq 0\] on $\R^{n}\times[0, T)$.
\end{lem}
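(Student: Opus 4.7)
The plan is to mimic the proof of Lemma~\ref{L-3-1} for the tensor
\[
a_{ij} := S_{ij} - \epsilon_2\, t\, H_i H_j,
\]
with $\epsilon_2 > 0$ a small constant depending only on $\epsilon_1$ and the dimension $n$, applying Hamilton's tensor maximum principle in the noncompact setting with the same cutoff $\phi_R(x) = 1 + |x/R|^2$ and auxiliary factors $e^{\delta t}$ and $\mu g_{ij}$.

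The main new ingredient is the evolution of $a_{ij}$. Since the Lagrangian angle $\theta$ satisfies the heat equation $\partial_t \theta = \Delta \theta$ and $H_i = \nabla_i \theta$, commuting covariant derivatives yields
\[
(\partial_t - \Delta) H_i = -R_i^l H_l,
\]
from which $(\partial_t - \Delta)(H_i H_j)$ acquires matching $-RHH$ terms together with the Bochner-type gradient contribution $-2 g^{kl}\nabla_k H_i \nabla_l H_j$. Substituting $S_{lj} = a_{lj} + \epsilon_2 t H_l H_j$ into the Ricci terms of $\partial_t S_{ij}$ causes the $RHH$ pieces to cancel exactly against those coming from $-\epsilon_2 t\,(\partial_t - \Delta)(H_i H_j)$, leaving the clean identity
\[
(\partial_t - \Delta) a_{ij} = -R^l_i a_{lj} - R^l_j a_{li} + 2 h^{km}_i h^n_{jk} S_{mn} - \epsilon_2 H_i H_j + 2\epsilon_2 t\, g^{kl}\nabla_k H_i \nabla_l H_j.
\]

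At a putative first null eigenvector $V$ of $a_{ij}$ at $(x_0, t_0)$, the Ricci terms in $a$ vanish (since $a_{ij}V^j = 0$) and the gradient contribution is manifestly nonnegative. Exploiting the total symmetry of the Lagrangian second fundamental form, set $T_{km} := h_{ikm} V^i$, which is symmetric in $(k,m)$; then $S_{mn} \geq \epsilon_1 g_{mn}$ gives $2 h^{km}_i h^n_{jk} S_{mn} V^i V^j \geq 2\epsilon_1 \sum_{k,m} T_{km}^2$, while $H_i V^i = \sum_k T_{kk}$ combined with Cauchy--Schwarz gives $(H_i V^i)^2 \leq n \sum_{k,m} T_{km}^2$. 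Choosing $\epsilon_2 = \epsilon_1/n$ (any value less than $2\epsilon_1/n$ suffices) therefore yields $(\partial_t - \Delta) a_{ij} V^i V^j \geq (\epsilon_1/n)(H_i V^i)^2 > 0$, the strict positivity following from $(H_i V^i)^2 = S_{ij}V^iV^j/(\epsilon_2 t_0) \geq \epsilon_1 |V|_g^2/(\epsilon_2 t_0) > 0$ at the null point. This contradicts the maximum principle condition, and the noncompact cutoff argument (perturbing to $e^{\delta t}\phi_R a_{ij} + \mu g_{ij}$ on $\R^n \times [0, T']$, using bounded geometry to control the $\nabla \phi_R$ and $\Delta \phi_R$ error terms, then sending $R \to \infty$, $\mu \to 0$, $\delta \to 0$) proceeds essentially verbatim from Lemma~\ref{L-3-1}. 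The main obstacle I anticipate is verifying the clean cancellation of the Ricci-$HH$ terms, which pivots on the identity $(\partial_t - \Delta) H_i = -R_i^l H_l$ coming from the heat equation for $\theta$; once that structural identity is in hand, the Cauchy--Schwarz estimate via the total symmetry of $h$ makes the dependence $\epsilon_2 \sim \epsilon_1/n$ transparent, and the noncompact adaptation carries over with no essential change.
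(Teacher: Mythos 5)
Your proposal follows essentially the same route as the paper: evolve Smoczyk's tensor $S_{ij}-\epsilon_2 t H_iH_j$, apply the noncompact maximum principle with the cutoff from Lemma~\ref{L-3-1}, observe that the Ricci terms contract to zero against a null vector, and control $\epsilon_2 H_iH_j$ by $2h^{km}_i h^n_{jk}S_{mn}$ using $S\geq\epsilon_1 g$. Your Cauchy--Schwarz computation with $T_{km}=h_{ikm}V^i$ is a useful clarification: it makes explicit the choice $\epsilon_2 \leq 2\epsilon_1/n$ where the paper says only ``for some choice of $\epsilon_2$ depending on $\epsilon_1$.''

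One small but real defect is the cutoff tensor you propose, $e^{\delta t}\phi_R a_{ij}+\mu g_{ij}$ with $a_{ij}=S_{ij}-\epsilon_2 t H_iH_j$. Since $\phi_R$ is unbounded, you cannot control this tensor near spatial infinity without already knowing $a_{ij}\geq 0$ there: bounded geometry only gives $a_{ij}\geq -C(T')g_{ij}$, so $e^{\delta t}\phi_R a_{ij}$ can have eigenvalues going to $-\infty$ as $|x|\to\infty$, and the fixed $\mu g_{ij}$ cannot rescue positivity. The paper instead uses $M_{ij}=e^{\delta t}\phi_R S_{ij}-\epsilon_2 t H_iH_j$, i.e.\ the cutoff multiplies only the $S$ part; since $S_{ij}\geq \epsilon_1 g_{ij}$ while $\epsilon_2 t H_iH_j$ stays bounded on $[0,T']$, $M_{ij}$ is automatically positive for $|x|$ large. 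The evolution of $M_{ij}$ then contains only $\nabla\phi_R\cdot\nabla S_{ij}$ and $(\Delta\phi_R) S_{ij}$ error terms, which are controlled exactly as in Lemma~\ref{L-3-1}, and strict positivity at a putative first null eigenvector comes from the $\delta e^{\delta t}\phi_R S_{ij}$ term rather than from your $(H_iV^i)^2>0$ observation; your observation is correct but then unnecessary once the cutoff is set up this way.
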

\begin{proof}   
Recall the following formulas from \cite{Sm}:
\begin{equation}\label{ape4}
\begin{split}
\frac{d}{dt} S_{ij} &= \ttD S_{ij} -R^{l}_i S_{lj} -R^l_j S_{li} +2h^{km}_i h^{n}_{jk} S_{mn}\\
\frac{d}{dt}H_i &= \ttD H_i -R^j_i H_j\\
\frac{d}{dt}\tg &= -2H^l h_{lij}
\end{split}
\end{equation}

Now for any $\delta, R >0$ and $\epsilon_2 >0$ to be determined, let $\phi_R(x):= 1+ |x/R|^2$ and consider the tensor \[M_{ij}:=e^{\delta t} \phi_R S_{ij} - \epsilon_2 t H_i H_j.\]
Then using \eqref{ape4} we calculate: 
\begin{equation}\label{ape5}
\begin{split}
\frac{d}{dt} M_{ij} =&e^{\delta t} \phi_R ( \ttD S_{ij} -R^{l}_i S_{lj} -R^l_j S_{li} +2h^{km}_i h^{n}_{jk} S_{mn})+\delta e^{\delta t} \phi_R S_{ij} \\
& - \epsilon_2 t (\ttD H_i -R^l_i H_l) H_j -\epsilon_2 t H_i (\ttD H_j -R^l_j H_l) -  \epsilon_2 H_i H_j\\
= &\ttD M_{ij} +2\epsilon_2 t \ttN H_i \ttN H_j  -R^{l}_i M_{lj} -R^l_j M_{li}\\
& -e^{\delta t} \ttD (\phi_R) S_{ij} -2e^{\delta t} \nabla \phi_R \nabla S_{ij} +\delta e^{\delta t} \phi_R S_{ij}\\& +2e^{\delta t} \phi_R h^{km}_i h^{n}_{jk} S_{mn} -  \epsilon_2 H_i H_j \\
\end{split}
\end{equation}

  Fix $0<T'<T$.  Now we want to prove that for any $\delta, \mu>0$ there exists a large number $R_0$ such that $M_{ij}>0$  for all $R\geq R_0$ in $[0, T^{'}]$.  It is clear that $M_{ij}(x, 0)>0$, and there exists $r$ such that for $|x|\geq r$ we have $M_{ij}(x, t)>0$ for any $t\in [0, T^{'}]$.  Suppose now that $M_{ij}$ has a zero eigenvalue at $(x_0, t_0)\in \R^n\times [0, T']$.   Then we have  $(x_0, t_0)\in \overline{B_{r}(0)}\times (0, T^{'}]$, and we may further assume that  $t_0$ is the first such time:  $M_{ij}>0$ for any $t<t_0$.  Let $V$ be a null vector for $M_{ij}(x_0, t_0)$ and extend $V$ locally in space and time as in the proof of Lemma \ref{L-3-1}. Then at $(x_0, t_0)$ we have (3.5) and 
\begin{equation}\label{E-3-4}
\frac{d}{dt} (M_{ij} V^iV^j)\leq0 \hspace{12pt} \mbox{and} \hspace{12pt} \ttD (M_{ij} V^iV^j)\geq0.
\end{equation}
We estimate the following at  $(x_0, t_0)$
\begin{equation}\label{ape7}
\begin{split}
N_{ij}V^i V^j:=&(2\epsilon_2 t \ttN H_i \ttN H_j  -R^{l}_i M_{lj} -R^l_j M_{li}  
-e^{\delta t} \ttD (\phi_R) S_{ij} \\& +2e^{\delta t} \phi_R h^{km}_i h^{n}_{jk} S_{mn} +\delta e^{\delta t} \phi_R S_{ij} -  \epsilon_2 H_i H_j )V^i V^j\\
\geq&( -e^{\delta t} \ttD (\phi_R) S_{ij} -2 e^{\delta t} \nabla\phi_R\nabla S_{ij}+\delta e^{\delta t} \phi_R S_{ij})V^iV^j\\
& +(2e^{\delta t} \phi_R h^{km}_i h^{n}_{jk} S_{mn}  -  \epsilon_2 H_i H_j )V^i V^j \\
\end{split}
\end{equation}

Now fix $\delta>0$.  Then as in the proof of Lemma 3.1 (see \eqref{E-3-new}), there exists $R_0$ depending on $T^{'}$ and $\delta$ such that for $R\geq R_0$, we have
\[
 -e^{\delta t} \ttD (\phi_R) S_{ij}-2 e^{\delta t} \nabla\phi_R\nabla S_{ij} +\delta e^{\delta t} \phi_R S_{ij} >\frac{1}{2}\delta e^{\delta t}  S_{ij}>0.
\]
Also, for some choice of $\epsilon_2$ depending only on $\epsilon_1$, the term in the last line in \eqref{ape7} is nonnegative since $S_{ij}\geq \epsilon_1 g_{ij}$.   Thus at $(x_0, t_0)$ we have, for these choices of constants 
 \begin{equation}\label{ape8}
\frac{d}{dt} (M_{ij} V^iV^j)=\ttD  (M_{ij} V^iV^j) + N_{ij} V^i V^j>0,
\end{equation}
which contradicts \eqref{E-3-4}.   We have thus proved that for the above choices of constants, $M_{ij}>0$ on $\R^n\times [0 ,T']$.   The lemma then follows from first letting $R \to \infty$ then letting $\delta \to 0$.
 \end{proof}

\begin{cor}\label{apec1}
Under the hypothesis in Lemma \ref{apel1} we have $|H|^2 \leq C/t$ on $\R^n \times [0, T)$ for some constant $C$ depending only on $\epsilon_1$.
 \end{cor}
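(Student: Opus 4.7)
The proof should be a short algebraic consequence of Lemma \ref{apel1} combined with a universal comparison between $S_{ij}$ and the induced metric $g_{ij}$. The plan is as follows.

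First I would observe that the tensor $S$ on $\R^{2n}$ defined by $S(V,W)=\langle \bar V, W\rangle$ satisfies $|S(V,W)|\leq |V|\,|W|$, because the reflection $V\mapsto \bar V$ is an isometry of the Euclidean metric. Pulling this back via $F(\cdot,t)$ and applying Cauchy--Schwarz gives the pointwise tensor inequality
\begin{equation*}
S_{ij}v^iv^j \leq g_{ij}v^iv^j
\end{equation*}
for every tangent vector $v$ and every $(x,t)\in\R^n\times[0,T)$. This is the universal upper bound on $S$ that the lower bound produced by Lemma \ref{apel1} can be played against.

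Next, at any point $(x,t)$ with $t>0$, I would plug the vector $v^i:=H^i=g^{ij}H_j$ into both sides. The lower bound $S_{ij}-\epsilon_2 t H_iH_j\geq 0$ from Lemma \ref{apel1} yields
\begin{equation*}
S_{ij}H^iH^j \geq \epsilon_2\, t\,(H_iH^i)^2 = \epsilon_2\, t\, |H|^4,
\end{equation*}
while the preceding universal bound gives
\begin{equation*}
S_{ij}H^iH^j \leq g_{ij}H^iH^j = |H|^2.
\end{equation*}
Combining these two inequalities yields $\epsilon_2 t |H|^4 \leq |H|^2$, hence $|H|^2 \leq 1/(\epsilon_2 t)$ wherever $H\neq 0$; the inequality is trivial where $H$ vanishes. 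Setting $C:=1/\epsilon_2$, which depends only on $\epsilon_1$ by Lemma \ref{apel1}, delivers the claim.

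There is essentially no obstacle here: once the elementary norm bound $|S(V,W)|\leq |V||W|$ is recorded, the corollary is a one-line pairing with $H^i$. The only thing to be careful about is to evaluate $S_{ij}-\epsilon_2 tH_iH_j\geq 0$ against the correct vector (the vector $H$ rather than a covector), so that the right-hand side genuinely produces $|H|^4$ and the left-hand side is controlled by $|H|^2$ through the metric.
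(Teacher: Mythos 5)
Your proof is correct and uses essentially the same idea as the paper: the universal pointwise bound on the pulled-back tensor $S$ relative to the induced metric $g$, played against the tensor inequality from Lemma \ref{apel1}. The paper contracts that inequality with $g^{ij}$ (using $g^{ij}S_{ij}\leq n$, giving $C=n/\epsilon_2$), whereas you contract with $H^iH^j$ (giving the slightly sharper $C=1/\epsilon_2$); both are one-line consequences of the same observation that $S_{ij}\leq g_{ij}$ as quadratic forms.
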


\begin{proof}
By the definition of $S$ on $\R^{2n}$, it is not hard to see that ${g}^{ij}S_{ij}\leq n$ everywhere on $\R^n$.  The desired result now follows immediately by tracing the tensor in Lemma \ref{apel1} with respect to the metric $g_{ij}$.
\end{proof}

This decay estimate is crucial in this paper. With the blow-up argument in next Section, it allows us to deal with initial data $u_0$ with Hessian in $L^{\infty}$ and obtain a longtime solution to \eqref{PMA} with curvature decay at $t=\infty$. 
  
\section{A priori estimates for (\ref{PMA})}
In this section we prove some a priori estimates for smooth solutions to \eqref{PMA}.  Let $u(x, t)$  be a smooth solution to \eqref{PMA} on $\R^{n}\times[0, T)$ for some $0<T\leq \infty$. We assume  that for every $k\geq 2$ we have 
$$
\sup_{\R^n} |D^k u(x, t)|\leq C(k, t)<\infty
$$ 
for any $t\in [0, T)$ where $C(t,k)$ is a positive constant which may go to infinity as $t$ tends to $T$ for a fixed $k$.  We will say that such a solution $u(x, t)$ satisfies the \begin{it}bounded geometry condition\end{it}  for any $t\in [0, T)$. 
Let $\tilde g_{ij}$ be the metric on $\R^n$ induced by 
$$
\tilde F(x, t):=(x, Du(x, t))
$$ 
and introduce the operator $L$ defined by  
$$
L f=\tilde g^{ij}f_{ij}
$$ 
for any smooth function $f$ on $\R^n$ with subscripts denoting partial differentiation with respect to the standard coordinates on $\R^n$.  Thus $\tilde g_{ij}=\delta_{ij}+u_{ik}u_{kj}$ and we note that $\tilde g$ and $g$ differ by a tangential diffeomorphism for each $t$.  We will also let 
$$
|D^ku|^2=u_{i_1,..,i_k}u_{i_1,..,i_k}
$$ 
where we adopt the summation convention for repeated indices.
 Differentiating \eqref{PMA} gives
 \begin{equation}\label{pmaape1}
 \frac{\p u_i}{\p t}=\tilde g^{pq}u_{pqi}=Lu_i.
\end{equation} 
\begin{lem}\label{L-4-1}
Suppose $u(x, t)$ is a smooth solution of \eqref{PMA} satisfying the \begin{it}bounded geometry condition\end{it}. Consider the nonnegative definite matrix $b_{ij}:=u_{ik}u_{kj}$. If $b_{ij}\leq (1-\delta) \delta_{ij}$ initially for some $\delta$ with $0<\delta<1$, then it remains so along the equation \eqref{PMA}.
\end{lem}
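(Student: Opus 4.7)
The plan is to apply a noncompact tensor maximum principle, in the spirit of Lemma~\ref{L-3-1}, to the symmetric $2$-tensor $w_{ij}:=(1-\delta)\delta_{ij}-b_{ij}$, which is nonnegative at $t=0$ by hypothesis.

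First I would compute the evolution of $w_{ij}$. Starting from $\partial_t u_i=Lu_i$ in \eqref{pmaape1} and commuting $\partial_j$ past $L$ using $\tilde g_{pq}=\delta_{pq}+u_{pk}u_{kq}$, one obtains $\partial_t u_{ij}-Lu_{ij}=(\partial_j\tilde g^{pq})u_{pqi}=:c_{ij}$, which in an eigenbasis of $D^2u$ with eigenvalues $\lambda_i$ reads
\begin{equation*}
c_{ij}=-\sum_{p,q}\frac{\lambda_p+\lambda_q}{(1+\lambda_p^2)(1+\lambda_q^2)}\,u_{pqi}u_{pqj}.
\end{equation*}
A short calculation, using $\partial_t b_{ij}=(\partial_t u_{ik})u_{kj}+u_{ik}(\partial_t u_{kj})$ and the product rule for $L$, then yields
\begin{equation*}
\partial_t w_{ij}-Lw_{ij}=N_{ij}:=-(c_{ik}u_{kj}+u_{ik}c_{kj})+2\tilde g^{pq}u_{ikp}u_{kjq}.
\end{equation*}

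Next I would verify the null-eigenvector condition. At a would-be first point $(x_0,t_0)$ where $w$ acquires a null eigenvector $V$, the scalar $f:=V^iV^jw_{ij}$ has an interior spacetime minimum, so $\nabla f(x_0)=0$. Working in the eigenbasis of $D^2u$ at $(x_0,t_0)$ and treating first the simple case $V=e_1$ with $\lambda_1^2=1-\delta$ (so $\lambda_p^2<1-\delta$ for $p\neq 1$), this gradient identity reduces to $\lambda_1 u_{11a}=0$, forcing $u_{11a}=0$ for all $a$ and hence $u_{1pq}=0$ whenever $p=1$ or $q=1$. Substituting into $V^iV^jN_{ij}$ and symmetrizing the third-derivative sum produces
\begin{equation*}
V^iV^jN_{ij}=\sum_{p,q\neq 1}\frac{(\lambda_p+\lambda_1)^2+(\lambda_q+\lambda_1)^2+2\delta}{(1+\lambda_p^2)(1+\lambda_q^2)}\,u_{1pq}^2\;\geq\;2\delta\sum_{p,q\neq 1}\frac{u_{1pq}^2}{(1+\lambda_p^2)(1+\lambda_q^2)}\;\geq\;0,
\end{equation*}
the decisive $+2\delta$ arising from $2(1-\lambda_1^2)$. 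When the null space of $w$ has dimension $\geq 2$, applying the gradient identity to every null direction forces the restriction of $D^3u$ to the null subspace to be sufficiently constrained that the corresponding null-eigenvector quadratic form remains nonnegative (in the extreme case of a fully degenerate null subspace the relevant $D^3u$ components all vanish).

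Finally I would run the noncompact maximum principle argument of Lemma~\ref{L-3-1} on the test tensor
\begin{equation*}
a_{ij}:=e^{\tau t}\phi_R\,(w_{ij}+\mu\delta_{ij})-(\mu-\mu')\delta_{ij},\qquad \phi_R=1+|x/R|^2,
\end{equation*}
for small $0<\mu'<\mu<\delta$ and $\tau,R>0$; the $\mu\delta_{ij}$-perturbation supplies the strict initial positivity that the non-strict hypothesis $w\geq 0$ does not. The bounded geometry assumption controls the lower-order terms from $L\phi_R$, $\nabla\phi_R$ and $\nabla w$; the null-eigenvector inequality handles the nonlinear term; and the $\tau e^{\tau t}\phi_R(w+\mu I)V^iV^j$ contribution supplies the strict positivity needed to rule out a first null eigenvector of $a$. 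Sending $R\to\infty$, $\mu'\to 0$, $\tau\to 0$, and then $\mu\to 0$, with the a priori positivity required for Step~1 of Lemma~\ref{L-3-1} propagated by the continuation argument of Step~2, yields $w\geq 0$ on $[0,T)$, as claimed. The main obstacle is the null-eigenvector verification: individual contributions to $V^iV^jN_{ij}$ for a given $(p,q)$ are not sign-definite, and one must carefully use the first-violation gradient conditions $\nabla f=0$ across every null direction to eliminate the offending $D^3u$ components before the $+2\delta$ slack emerges.
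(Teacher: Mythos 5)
Your proof is correct, but it takes a genuinely different path from the paper's. The paper proves Lemma~\ref{L-4-1} in one short paragraph by pure translation: Lemma~\ref{pl1} identifies the PMA solution with a MCF solution up to tangential reparametrization, and pulling back the constant tensor $S$ along the graph gives $\tilde S_{ij}=\delta_{ij}-u_{ki}u_{kj}$ and $\tilde g_{ij}=\delta_{ij}+u_{ki}u_{kj}$; with $\epsilon=(1-\delta)/(1+\delta)$, the invariant cone $S_{ij}-\epsilon g_{ij}\geq 0$ from Lemma~\ref{L-3-1} is literally the same inequality as $b_{ij}\leq(1-\delta)\delta_{ij}$, so nothing new needs to be proved. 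You instead re-derive a tensor maximum principle directly for~\eqref{PMA}, recomputing the evolution $\partial_t w_{ij}-Lw_{ij}=N_{ij}=-(c_{ik}u_{kj}+u_{ik}c_{kj})+2\tilde g^{pq}u_{ikp}u_{kjq}$; your formulas for $c_{ij}$ and $N_{ij}$ check out, and the noncompact cutoff $\phi_R$ apparatus mirrors the paper's Lemma~\ref{L-3-1}. What the paper's route buys is economy (reuse of a lemma already proved); what yours buys is a self-contained nonparametric argument that does not invoke Lemma~\ref{pl1} or the parametric formalism at all.

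One genuine simplification you should notice: you did not actually need the first-violation gradient condition $\nabla(w(V,V))=0$, and the worry about it expressed in your last paragraph is misplaced. If you symmetrize the third-derivative sum in $(p,q)$ without first discarding the terms with $p=1$ or $q=1$, you obtain, for \emph{any} null direction $V=e_1$ of $w$ (so $\lambda_1^2=1-\delta$),
\begin{equation*}
V^iV^jN_{ij}=\sum_{p,q}\frac{(\lambda_p+\lambda_1)^2+(\lambda_q+\lambda_1)^2+2\delta}{(1+\lambda_p^2)(1+\lambda_q^2)}\,u_{1pq}^2\;\geq\;2\delta\sum_{p,q}\frac{u_{1pq}^2}{(1+\lambda_p^2)(1+\lambda_q^2)}\;\geq 0,
\end{equation*}
with the sum over \emph{all} indices; the numerator completes to squares identically, using only $\lambda_1^2=1-\delta$. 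So the reaction term is manifestly nonnegative-definite on the null set, without any first-order information. This removes both of the difficulties you flagged: there is nothing extra to do when the null space of $w$ has dimension $\geq 2$, and in the perturbed maximum principle with $a_{ij}=e^{\tau t}\phi_R(w_{ij}+\mu\delta_{ij})-(\mu-\mu')\delta_{ij}$ you do not need to propagate an approximate gradient identity through the $\phi_R$-weight (a point you glossed over, and which would otherwise require estimating the $O(|x|/R^2)$ error in $\nabla(w(V,V))$ against the $\tau$-term). With the null-vector inequality holding algebraically, the noncompact argument reduces exactly to the paper's Step~1/Step~2 template from Lemma~\ref{L-3-1}.
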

\begin{proof}This is a consequence of Lemma \ref{L-3-1} along the mean curvature flow \eqref{LMCF}. Let $u(x, t)$ be a smooth solution of \eqref{PMA}.  By Lemma  \ref{pl1}, the family of Lagrangian graphs $\tilde{F}(x, t)=(x, Du(x, t))$ differs by a tangential diffeomorphism  to a smooth solution $F(x, t)$ of the mean curvature flow \eqref{LMCF} with the initial data $\tilde F_0(x)=(x, Du(x, 0) )$. Let $\epsilon=(1-\delta)/(1+\delta)$.  Now pulling back the ambient tensors $S$ and  $\langle\cdot, \cdot \rangle$ by $F(x, t)$, Lemma \ref{L-3-1} shows that $S_{ij}-\epsilon  g_{ij}\geq 0$ is preserved along the mean curvature flow \eqref{LMCF}.   Now let $\tilde{S}=F^*S$.  Then as $\tilde{F}$ and $F$ differ only by tangential diffeomorphisms we have $\tilde{S}_{ij}-\epsilon  \tilde{g}_{ij}\geq 0$ is preserved along the flow \eqref{PMA}.  Writing this  in terms of the potential $u$ gives $\tilde{S}_{ij}=\delta_{ij}-u_{ki}u_{kj}$ and $\tilde g_{ij}=\delta_{ij}+u_{ki}u_{kj}$ respectively, and thus $\delta_{ij}-u_{ki}u_{kj}-\epsilon (\delta_{ij}+u_{ki}u_{kj})\geq 0$ is preserved along the flow (\ref{PMA}).  This implies that $b_{ij}\leq (1-\delta)\delta_{ij}$ is preserved along \eqref{PMA}.
\end{proof}
A direct consequence of Lemma \ref{L-4-1} is that the relation 
$$
-(1-\delta)I_n \leq D^2u\leq(1-\delta) {I_n}
$$ 
is preserved along \eqref{PMA} for any $\delta\in (0, 1)$.  

Next, we will derive higher order estimates for \eqref{PMA} via a blowup argument. To do so, we will employ a parabolic scaling which we now describe.   Define
\begin{eqnarray*}
 y&=&\lambda (x-x_0),\\
 s&=&\lambda^{2}(t-t_0),\\
 u_{\lambda}(y, s)&=&\lambda^2 \left(u(x, t )- u (x_0, t_0)-D_{x}u(x_0,t_0\right)\cdot (x-x_0)).
\end{eqnarray*}
We compute \[ D^2_{ y}  u_{\lambda}=D_x^2u \,\,\,\,\,\mbox{and}\,\,\,\,\, \frac{\partial}{\partial s} u_{\lambda}=\frac{\partial}{\partial t} u. 
\]
So $ u_{\lambda}(y,s)$ is  a solution of \eqref{PMA} with $ u_{\lambda}(0, 0)=0$ and $D  u_{\lambda}(0,0)=0$. Also we can verify
\[
D^l_{y}  u_{\lambda} ( y, s)=\lambda^{2-l} D^l_{x}u(x, t)
\]
for all nonnegative integers $l$. We refer to $(y,D u_{\lambda}(y,s))$ as the {\it parabolic scaling of the graph $(x, Du(x, t))$ by $\lambda$ at $(x_0, t_0)$}.

\begin{lem}\label{pmaapel4}Let $u$ be a smooth solution of \eqref{PMA} in $[0, T)$  satisfying the {\it bounded geometry condition}. Suppose $|D^2u|^2\leq C$ and $|D^3u|^2\leq C$ on $\mathbb{R}^n \times [0, T)$ for some $C$. Then for every $l \geq 4$ there exists a constant $C_{l}$ such that  
\[\sup_{x\in \R^n} |D^lu(x, t)|^2 \leq C_l\] for all $t\in [0, T)$.\end{lem}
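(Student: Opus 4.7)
The plan is to prove the estimates by induction on $l\geq 4$, viewing each partial derivative of $u$ as a solution of a linear uniformly parabolic equation and bootstrapping interior parabolic regularity. Writing \eqref{PMA} as $\p_t u = F(D^2 u)$ with $F^{ij}:=\p F/\p u_{ij}=\tilde g^{ij}$, repeated differentiation shows that for any multi-index $\sigma$ with $|\sigma|=k\geq 2$, the function $v=D^{\sigma}u$ solves an equation of the schematic form
\begin{equation*}
\p_t v \;=\; \tilde g^{ij} v_{ij} + R_\sigma(D^2u,\ldots,D^{k+1}u),
\end{equation*}
where $R_\sigma$ is a universal polynomial whose arguments are derivatives of $u$ up to order $k+1$. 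By Lemma \ref{L-4-1} and the hypothesis $|D^2 u|^2 \leq C$, the matrix $\tilde g^{ij}=(\delta_{ij}+u_{ip}u_{pj})^{-1}$ is uniformly elliptic with constants depending only on $\delta$ and $C$, and $|D^3 u|^2 \leq C$ makes $\tilde g^{ij}$ Lipschitz in $x$.

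For the base case $l=4$, take $|\sigma|=2$, so $v=u_{ab}$ is bounded and $R_\sigma$, being a polynomial in $D^2u$ and $D^3 u$, is also bounded. Interior parabolic $W^{2,p}$ theory (Calder\'on--Zygmund) applied to the equation for $v$ gives $v\in W^{2,p}_{\rm loc}$ for every $p<\infty$, and parabolic Sobolev embedding yields $Dv=D^3 u\in C^{\alpha_1}_{\rm loc}$ for some $\alpha_1>0$. With this gained H\"older regularity of $D^3 u$, both $\tilde g^{ij}$ and $R_\sigma$ belong to $C^{\alpha_1}$, so interior parabolic Schauder estimates applied to the equation for $v$ give $v\in C^{2+\alpha_1,1+\alpha_1/2}_{\rm loc}$; in particular $D^4 u$ is locally bounded. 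For the induction step, assume $|D^k u|\leq C_k$ and $D^k u\in C^{\alpha_k}_{\rm loc}$ uniformly for all $k\leq l$. Then for $|\sigma|=l-1$ the coefficients $\tilde g^{ij}$ and the source $R_\sigma$ in the equation for $v=D^\sigma u$ are both H\"older, so Schauder gives $v\in C^{2+\alpha_l,1+\alpha_l/2}_{\rm loc}$ and hence $D^{l+1}u$ is locally bounded and H\"older.

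Global estimates on $\R^n$ then follow from the translation invariance of \eqref{PMA} in $x$: the interior estimate on $B_1(x_0)\times[t_0-1,t_0]$ is uniform in the center $x_0$. For $t_0\geq 1$ the cylinder lies in $[0,T)$; for $0\leq t_0<1$ I would apply the Schauder estimate up to the initial slice, using the bounded geometry assumption at $t=0$ to control the initial data. The main obstacle is the low initial regularity: by hypothesis only $L^\infty$ bounds on $D^2 u$ and $D^3 u$ are given, so interior Schauder cannot be invoked directly. The $W^{2,p}$/Calder\'on--Zygmund step in the base case is precisely what upgrades $D^3 u$ from $L^\infty$ to $C^{\alpha_1}$ and starts the Schauder bootstrap; thereafter the iteration on $l$ is routine.
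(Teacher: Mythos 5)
Your proof is correct, but it takes a genuinely different route from the paper. The paper handles $l=4$ by a blow-up/compactness argument: parabolically rescale at a sequence of near-maximal points of $|D^4 u|$, use the hypothesis $|D^3u|^2\le C$ to force $D^3_y u_{\lambda_k}\to 0$ in the rescaled picture, and extract a limit solution $u_\infty$ with $D^3 u_\infty\equiv 0$ yet $|D^4 u_\infty(0,0)|\ge 1$, a contradiction; then for $l\ge 5$ the paper invokes standard quasilinear bootstrapping starting from a $D^4 u$ bound. You instead run a direct linear parabolic regularity argument (Calder\'on--Zygmund $W^{2,1}_p$ to upgrade $D^3 u$ to H\"older, then Schauder iteration), which gives all $l\ge 4$ at once and avoids compactness entirely. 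This is cleaner in structure, at the cost of invoking heavier linear machinery.

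One delicate point you gloss over is the time-regularity of the coefficients $\tilde g^{ij}$. You correctly observe that $|D^3u|\le C$ makes $\tilde g^{ij}$ Lipschitz in $x$, but a priori $\tilde g^{ij}$ is only bounded measurable in $t$, since controlling $\partial_t D^2 u$ would already require a $D^4 u$ bound. Classical parabolic $W^{2,p}$ theory (Solonnikov) assumes joint continuity, so strictly speaking you need a version of the Calder\'on--Zygmund estimate that tolerates merely measurable time-dependence with VMO (here, Lipschitz) spatial dependence (Krylov, Dong--Kim). Alternatively, you can avoid Calder\'on--Zygmund altogether by an interpolation observation: since $\partial_t u_a=\tilde g^{ij}u_{ija}$ is bounded and $D^2 u_a=D^3 u$ is bounded, Gagliardo--Nirenberg applied to $g(x)=u_a(x,t)-u_a(x,s)$ gives $|Dg|_{L^\infty}\le c|g|_{L^\infty}^{1/2}|D^2 g|_{L^\infty}^{1/2}\le c'|t-s|^{1/2}$, so $D^2 u$ is already $1/2$-H\"older in time and hence $\tilde g^{ij}\in C^{\alpha,\alpha/2}_{loc}$ for any $\alpha<1$; one can then apply Schauder directly to $v_0=u_a$ (which has no source term) to get $D^3 u\in C^{\alpha,\alpha/2}_{loc}$ and $D^4 u$ bounded, and bootstrap from there. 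Finally, a small remark: the appeal to Lemma \ref{L-4-1} and the dependence on $\delta$ in your ellipticity claim are unnecessary and slightly off, since the lemma's hypotheses do not mention $\delta$; uniform ellipticity of $\tilde g^{ij}$ already follows from $\delta_{ij}\le \tilde g_{ij}\le (1+C)\delta_{ij}$, with constants depending only on $C$.
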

\begin{proof} Instead of appealing to results from the mean curvature flow theory on higher order derivatives of $A$  and then converting them to derivatives of $u$, we argue directly for the equation \eqref{PMA}. 
Suppose in addition $|D^4u|^2\leq C$ in $[0, T)$. Then a standard parabolic bootstrapping argument for the quasilinear equation \eqref{pmaape1} gives $|D^lu|^2\leq C_l$ for $l\geq 5$. It will thus suffice to prove the lemma for $l=4$ which we do below.

Suppose that $|D^4u|$ were not bounded over ${\mathbb R}^n\times[0,T)$. By the 
bounded geometry condition assumption on $u$, there would be a sequence $t_k\to T$ such that  
$$
 2\mu_k:=\sup_{x \in \R^n} |D^4u (x, t_k)|^2 \rightarrow \infty
$$ 
and 
$$
\sup_{t\leq t_k,x\in  \R^n}|D^4u(x, t)|^2\leq 2\mu_k.
$$
 Then there exists $x_k$ such that $|D^4u(x_k, t_k)|^2\geq \mu_k\rightarrow \infty$ for $t_k\rightarrow T$.  Let $(y, Du_{\lambda_k}(y, s))$ be the parabolic scaling of $(x, Du(x, t))$ by $\lambda_k=\mu_k^{1/4}$ at $(x_k, t_k)$ for each $k$.

Thus $ u_{\lambda_k}(y, s)$ is a solution of \eqref{PMA} for $s \in [-\lambda^2_k t_k, 0]$ and the first order derivatives of $u_{\lambda_k}$ satisfies the quasilinear parabolic equation (\ref{pmaape1}):
$$
\frac{\partial (u_{\lambda_k})_p}{\partial s} =\tilde g^{ij}(u_{\lambda_k})_{pij}
$$
Note that
\begin{eqnarray*}
&&|D^2_{y} u_{\lambda_k} |=|D^2_xu|\leq C,\\
&&|D^3_{y} u_{\lambda_k} |^2=\lambda_k^{-2}|D^3_xu|^2\rightarrow 0\,\,\,\mbox{as}\,\,\, k\to\infty
\end{eqnarray*}
and 
\begin{eqnarray*}
&&|D^4_{y} u_{\lambda_k} |^2\leq \lambda_k^{-4}|D^4_xu|^2\leq 2, \\
&&|D^4_{y} u_{\lambda_k} (0, 0)|\geq 1.
\end{eqnarray*}
By the parabolic bootstrapping argument, $|D_{y}^l u_{\lambda_k}|$ are uniformly bounded for $s\in [-\lambda_k^{2}t_k, 0], l\geq 5$ and for any $k$. Therefore, the $s$ derivatives of $ u_{\lambda_k}$ of any positive order are uniformly bounded as well. 
Recall that $ u_{\lambda_k}(0, 0)=0$ and $D_{y} u_{\lambda_k}(0,0)=0$, hence in any fixed ball $B_R(0)$ in ${\mathbb R}^n$ there is a positive constant $C$ independent of $k$ and $s$ such that
$$
\left|  u_{\lambda_k} (y,s) \right| \leq C (R^2+|s|^2)\,\,\,\mbox{and}\,\,\,\left|D_{y} u_{\lambda_k}(y,s)\right|\leq C(R+|s|).
$$
Therefore $ u_{\lambda_k}$ converges subsequentially to a smooth function $ u_R$ on $B_R(0)\times [-R,0]$ for any $R>0$, and 
a diagonal sequence argument shows that  $ u_{\lambda_k}$ converges subsequentially and uniformly on compact subsets in $\R^n \times (-\infty, 0]$ to a smooth solution $ u_\infty$ of \eqref{PMA}  with
\[
|D^3_{y}  u_\infty|= 0\,\,\,\mbox{and}\,\,\, |D^4_{y}  u_\infty(0, 0)|\geq 1,\]
which is a contradiction.  Thus $\left|D^4u\right|$ is bounded in $[0,T)$, and hence completes the proof.   
\end{proof}

\begin{lem}\label{pmaapel3}
Let $u$ be a smooth solution of \eqref{PMA} in $[0, T)$  satisfying the bounded geometry condition. Suppose $-(1-\delta)I_n\leq D^2 u(x, 0)\leq (1-\delta)I_n$ for some $\delta\in(0,1)$ and $|H|\leq C$ on $\R^{n}\times[0, T)$ for some constant $C$.  Then for every $l \geq 3$ there exists a constant $C_l$ such that  
\[ \sup_{x\in \R^n} |D^lu(x, t)|^2 \leq C_l \] for all $t\in [0, T)$.
\end{lem}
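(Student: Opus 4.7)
The plan is to reduce the lemma to a uniform bound on $|D^3u|$ over $\R^n\times[0,T)$, since Lemma \ref{pmaapel4} then immediately delivers the estimates for all $l\geq 4$. The bound on $|D^3u|$ will be produced by a parabolic blowup/contradiction argument in which the hypothesis $|H|\leq C$ is exactly what guarantees that the blowup limit is a \emph{minimal} (hence special) Lagrangian graph, at which point Yuan's Bernstein theorem \cite{Y} can be invoked to rule the limit out.

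Suppose toward a contradiction that $|D^3u|$ fails to be uniformly bounded on $\R^n\times[0,T)$. By the bounded geometry condition and a standard point-picking argument, I would choose a sequence $(x_k,t_k)\in\R^n\times[0,T)$ with $M_k:=|D^3u(x_k,t_k)|\to\infty$ and with $M_k$ (up to a factor of two) realizing the supremum of $|D^3u|$ on $\R^n\times[0,t_k]$; we may further assume $t_k\geq t_\ast>0$, so that $\lambda_k^2 t_k\to\infty$. Applying the parabolic rescaling of Section 4 with $\lambda_k=M_k$ at $(x_k,t_k)$, each $u_{\lambda_k}$ solves \eqref{PMA} on $\R^n\times[-\lambda_k^2 t_k,0]$ with $u_{\lambda_k}(0,0)=0$ and $Du_{\lambda_k}(0,0)=0$. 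The identity $D^2_y u_{\lambda_k}=D^2_x u$ combined with Lemma \ref{L-4-1} preserves the Hessian bound $-(1-\delta)I_n\leq D^2 u_{\lambda_k}\leq(1-\delta)I_n$; the choice of $\lambda_k$ yields $|D^3 u_{\lambda_k}|\leq\sqrt{2}$ on the whole rescaled slab and $|D^3 u_{\lambda_k}(0,0)|=1$; and crucially the mean curvature of the rescaled graph scales as $|H_{\lambda_k}|=\lambda_k^{-1}|H|\leq C/M_k\to 0$.

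With the uniform bounds on $|D^2u_{\lambda_k}|$ and $|D^3 u_{\lambda_k}|$ in hand, Lemma \ref{pmaapel4} supplies uniform bounds $|D^l u_{\lambda_k}|\leq C_l$ for every $l\geq 4$, so Arzel\`a--Ascoli extracts a subsequential $C^\infty_{\mathrm{loc}}$ limit $u_\infty$ on $\R^n\times(-\infty,0]$ which is a smooth entire solution of \eqref{PMA} satisfying $-(1-\delta)I_n\leq D^2u_\infty\leq(1-\delta)I_n$, $|D^3u_\infty(0,0)|=1$, and $H_\infty\equiv 0$. Using $H_\infty=J\nabla\theta_\infty$ for the Lagrangian angle $\theta_\infty=\sum_i\arctan\lambda_i(D^2u_\infty)$, the vanishing of $H_\infty$ forces $\nabla\theta_\infty\equiv 0$ in space at every time, and then $\partial_s u_\infty=\theta_\infty$ shows that $\theta_\infty=\theta_\infty(s)$ depends only on $s$, so $u_\infty(y,s)=u_\infty(y,0)+\Psi(s)$ for some function $\Psi$. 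In particular $u_\infty(\cdot,0)$ is a classical entire solution of the special Lagrangian equation \eqref{E-special-lagrangian} with Hessian bounded by $(1-\delta)I_n$, and Yuan's Bernstein theorem forces $u_\infty(\cdot,0)$ to be quadratic; hence $D^3u_\infty\equiv 0$, contradicting $|D^3u_\infty(0,0)|=1$. This gives the desired bound $|D^3u|^2\leq C_3$, and Lemma \ref{pmaapel4} then closes the estimates for $l\geq 4$.

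The main obstacle I anticipate lies in the scaling bookkeeping: one must carry out the point picking so that the rescaled $|D^3 u_{\lambda_k}|$ is controlled globally on each rescaled slab, arrange $\lambda_k^2 t_k\to\infty$ so that the limit is entire in space-time and Yuan's theorem applies, and confirm that the constants in Lemma \ref{pmaapel4} when applied to $u_{\lambda_k}$ depend only on the $C^2$ and $C^3$ bounds (which are uniform in $k$) rather than on the individual bounded-geometry data of $u_{\lambda_k}$. Once these technical points are handled, the rest is a routine combination of Lemma \ref{L-4-1}, parabolic regularity, and Yuan's rigidity theorem for entire special Lagrangian graphs.
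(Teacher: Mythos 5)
Your proposal is essentially the same argument as the paper's: reduce to the case $l=3$ via Lemma~\ref{pmaapel4}, run a parabolic blowup with scale $\lambda_k$ tied to the running supremum of $|D^3u|$ (the paper takes $\lambda_k=A(t_k)$ where $A(t):=\sup_{t'\leq t,x}|D^3u|$ and picks $(x_k,t_k)$ with $|D^3u(x_k,t_k)|\geq A(t_k)/2$, which is your ``up to a factor of two'' point-picking), use Lemma~\ref{L-4-1} for the preserved Hessian pinching, note that $|H_{\lambda_k}|\leq\lambda_k^{-1}C\to 0$, extract a smooth entire limit with Lemma~\ref{pmaapel4} plus Arzel\`a--Ascoli, and apply Yuan's Bernstein theorem to obtain the contradiction. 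Your added remarks — that $t_k$ is necessarily bounded away from zero so the rescaled slab exhausts $(-\infty,0]$, and the explicit deduction that $H_\infty\equiv 0$ forces each time slice of $u_\infty$ to be special Lagrangian — are correct elaborations of steps the paper leaves implicit.
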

\begin{proof} By Lemma \ref{L-4-1}, $-(1-\delta)I_n\leq D^2 u \leq (1-\delta)I_n$ holds in $[0,T)$. Also, by Lemma \ref{pmaapel4},
we need only to prove the lemma in the case $l=3$.  Suppose that the lemma were false for $l=3$. Let $$A(t):=\sup_{t'\leq t, x\in \R^n} |D^3u(x, t)|.$$  Then there is a sequence $(x_k, t_k)$ along which we have $|D^3u (x_k, t_k)|\geq A (t_k)/2$ while $A(t_k) \to \infty$ as $t_k\rightarrow T$. 
Let $\lambda_k=A(t_k)$.  For each $k$ let $(y, D_{y} u_{\lambda_k}(y, s))$ be the parabolic scaling of the graph $(x,  Du(x, t))$ by $\lambda_k$ at $(x_k, t_k)$.  Then  $u_{\lambda_k}(y, s)$ is a smooth solution of \eqref{PMA} on $\R^n \times [-\lambda_k^2t_k, 0]$.  Note that
\begin{eqnarray*}
&&\left|D^2_{y}u_{\lambda_k}\right|\leq C\\
&&|D^3_{y}u_{\lambda_k}|=\lambda_k^{-1}|D^3_xu|\leq 1\\
\end{eqnarray*}
on $\R^n \times [-\lambda_k^2t_k, 0]$ and
\[
|D^3_{y} u_{\lambda_k}(0, 0)|\geq \frac{1}{2}.
\]
By Lemma \ref{pmaapel4}, we conclude that all the higher derivatives of $u_{\lambda_k}$ are uniformly bounded on $\R^n \times [-\lambda_k^2t_k, 0]$. As in the proof of Lemma \ref{pmaapel4}, there is a subsequence of $u_{\lambda_k}$ converging smoothly and uniformly on compact subsets in $\mathbb{R}^n\times (-\infty, 0]$ to  a smooth solution  $u_\infty$ to \eqref{PMA} on $\mathbb{R}^n\times (-\infty, 0]$.  Since $|H|\leq C$ for the graphs $(x, Du)$ by assumption, after scaling we have $|H_{\lambda_k}|\leq \lambda_k^{-1} C$ for the graphs $(y, D_{y} u_{\lambda_k} )$.  It follows that the graphs $(y,  D_{y} u_\infty (y, s))$  have $|H_\infty|=0$ everywhere and is then a special Lagrangian graph with $-I_n\leq D_{y}^2  u_\infty\leq I_n$.  Then by the Bernstein Theorem in  \cite{Y}, $u_{\infty}$  is a quadratic polynomial.  This contradicts $|D^3_{y} u_\infty (0, 0)|=1/2$.\end{proof}

We end the section by showing that a bound on the height of the graphs is preserved along \eqref{PMA}. We only need this for the convergence part of Theorem \ref{entire}.
\begin{lem}\label{pmaapel2}
Suppose that $u$ is a smooth solution of (\ref{PMA}) in $[0,T)$ and satisfies the bounded geometry condition. Then 
$$\sup_{x\in \R^n} |Du(x, t)|^2 \leq \sup_{x\in \R^n} |Du(x, 0)|^2$$ for all $t\in [0, T)$.
\end{lem}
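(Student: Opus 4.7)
The plan is to show that $|Du|^2$ is a subsolution of the linear uniformly parabolic operator $\partial_t - L$, and then to apply a maximum principle on $\R^n$ by means of a spatial barrier that handles the noncompact domain. The key calculation starts from \eqref{pmaape1}, namely $\partial_t u_i = Lu_i$. Since $L|Du|^2 = 2\tilde g^{pq}u_{pi}u_{qi} + 2u_i Lu_i$, one obtains $(\partial_t - L)|Du|^2 = -2\tilde g^{pq}u_{pi}u_{qi} \le 0$ by positive-definiteness of $\tilde g^{pq}$, so $|Du|^2$ is indeed a subsolution.

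Fix $T' \in (0, T)$ and set $M_0 := \sup_{\R^n}|Du_0|^2$, which we may assume is finite (otherwise the claim is vacuous). By the bounded geometry hypothesis, $|D^2 u| \le K$ on $\R^n \times [0,T']$ for some $K = K(T')$, so $L$ has uniformly bounded and uniformly elliptic coefficients on this slab. Integrating this Hessian bound along radial segments gives $|Du(x,t)| \le |Du(0,t)| + CK|x|$, with $|Du(0,\cdot)|$ bounded on $[0,T']$ by smoothness, so $|Du|^2$ grows at most quadratically in $|x|$ uniformly on $[0,T']$.

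To close the argument, I introduce the spatial barrier $\phi(x,t) := \epsilon e^{At}(1+|x|^2)^2$, for $\epsilon > 0$ and $A$ to be chosen. A direct computation using the uniform bound on $\tilde g^{pq}$ shows that $(\partial_t - L)\phi \ge 0$ once $A$ is taken large enough depending only on $n$ and $K$. Set $w := |Du|^2 - M_0 - \phi$; then $(\partial_t - L)w \le 0$, $w(\cdot,0) \le -\epsilon(1+|x|^2)^2 < 0$, and because $\phi$ grows quartically while $|Du|^2$ grows only quadratically in $|x|$, $w \to -\infty$ uniformly in $t\in[0,T']$ as $|x|\to\infty$. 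Hence $w \le 0$ on the parabolic boundary of $B_R(0)\times[0,T']$ for all $R$ sufficiently large, and the classical weak maximum principle yields $w \le 0$ on the whole cylinder. Letting $\epsilon \downarrow 0$ gives $|Du|^2 \le M_0$ on $\R^n \times [0,T']$, and since $T' < T$ was arbitrary, the lemma follows.

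The only substantive obstacle is the noncompactness of $\R^n$: without a barrier there is no a priori guarantee that a spatial supremum of $|Du|^2$ is attained or controlled by interior behavior. The barrier $\phi$ is calibrated precisely so that its quartic spatial growth strictly dominates the (at worst) quadratic growth of $|Du|^2$ inherited from the bounded-geometry bound on $|D^2u|$, reducing the problem to a bounded parabolic cylinder where the standard weak maximum principle applies. An alternative would be to appeal to a Widder/Aronson--Besala-type uniqueness theorem for linear parabolic equations with bounded coefficients, but the explicit barrier here is more elementary and in the spirit of the maximum-principle arguments used in Sections 3 and 4.
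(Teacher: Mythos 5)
Your proof is correct and takes essentially the same approach as the paper: the key step in both is the computation $(\partial_t - L)|Du|^2 = -2\tilde g^{pq}u_{pi}u_{qi} \le 0$, after which one must invoke a maximum principle for subsolutions on the noncompact domain $\R^n$. The paper dispatches that second step by citing the maximum principle of Ecker and Huisken, whereas you prove it self-containedly via the quartic barrier $\epsilon e^{At}(1+|x|^2)^2$, and your barrier computation is sound (indeed, since $\tilde g^{pq}\le\delta^{pq}$ always, the constant $A$ need only depend on $n$, not on the Hessian bound $K$).
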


\begin{proof}
Multiplying \eqref{pmaape1} by $u_i$ and summing over $i$ gives 
 \begin{equation}\label{pmaape4}
\left(\frac{\p }{\p t} -L\right)|Du|^2=-2ng^{pq}u_{pi}u_{qi}\leq 0.
\end{equation} 
As $u$ satisfies the bounded geometry condition, the lemma follows from the maximum principle in \cite{HE}.
\end{proof}

\section{An approximating sequence}
 In this section we first construct a sequence of smooth approximations, each with bounded geometry,  of the initial data in Theorem \ref{entire}.    Then we establish a general shorttime existence result for the nonparametric mean curvature flow equation \eqref{GMCF}. By  the a priori estimates in Section 3 and Section 4, we obtain a longtime solution to \eqref{PMA} for each approximation.
 
 \begin{lem}\label{ltl0.9}
 Let $u_0:\R^n \to \R$ be as in Theorem \ref{entire}. Then there exists a sequence of smooth functions $u^k_0:\R^n\to \R$  such that
 \begin{enumerate}
\item $u^k_0 \to u_0$ in $C^{1+\alpha}(B_R(0))$ for any $R$ and $\alpha \in (0, 1)$,
\item $-(1-\delta)I_n \leq D^2 u^k_0 \leq (1-\delta)I_n$ for every $k$,
 \item $\sup_{x\in\R^n}|D^l u^k_0| \leq C_{l, k}$ for every $l \geq 2$ and $k$.
 \end{enumerate}
 \end{lem}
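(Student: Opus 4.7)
My plan is to define the approximating sequence by standard mollification and then verify each of the three conditions in turn. Let $\rho \in C_c^{\infty}(\R^n)$ be a standard nonnegative mollifier with $\int\rho = 1$ and $\operatorname{supp}(\rho)\subset B_1(0)$, and set $\rho_{\epsilon}(x) := \epsilon^{-n}\rho(x/\epsilon)$. I would take
\[
u_0^k(x) := (u_0 * \rho_{1/k})(x) = \int_{\R^n} u_0(x-y)\rho_{1/k}(y)\,dy,
\]
which is smooth for each $k$ since $\rho_{1/k}$ is smooth and $u_0$ is locally integrable (in fact locally Lipschitz).

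For condition (2), the point is that convolution commutes with differentiation in the sense that $D^2 u_0^k = (D^2 u_0) * \rho_{1/k}$, interpreted pointwise from the $L^{\infty}$ representative. Because $\rho_{1/k} \geq 0$ and $\int \rho_{1/k} = 1$, at each $x$ the matrix $D^2 u_0^k(x)$ is a convex average of values of $D^2 u_0$ lying in the compact set $\{M : -(1-\delta)I_n \leq M \leq (1-\delta)I_n\}$, so the bound is preserved pointwise for every $k$. For condition (3), write
\[
D^l u_0^k = D^{l-2}\bigl((D^2 u_0) * \rho_{1/k}\bigr) = (D^2 u_0) * D^{l-2}\rho_{1/k}
\]
for $l \geq 2$; then $\|D^l u_0^k\|_{L^{\infty}} \leq \|D^2 u_0\|_{L^{\infty}} \|D^{l-2}\rho_{1/k}\|_{L^1}$, which is finite and yields a constant $C_{l,k}$ depending on $l$ and $k$ (the $k$-dependence entering through the $\epsilon^{-(l-2)}$ factor picked up by $\|D^{l-2}\rho_{1/k}\|_{L^1}$).

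For condition (1), I would first observe that the bound $\|D^2 u_0\|_{L^{\infty}}<\infty$ forces $u_0 \in C^{1,1}_{\mathrm{loc}}(\R^n)$, and in particular $Du_0$ is continuous. Then $u_0^k \to u_0$ and $Du_0^k = (Du_0) * \rho_{1/k} \to Du_0$ uniformly on every compact set $K \subset \R^n$ by standard mollifier properties applied to continuous functions. Combined with the uniform estimate $\|D^2 u_0^k\|_{L^{\infty}} \leq 1-\delta$ already established in (2), the family $\{u_0^k\}$ is uniformly bounded in $C^{1,1}(B_R(0))$. A routine interpolation argument (a sequence converging in $C^1$ on $K$ and uniformly bounded in $C^{1,1}(K)$ converges in $C^{1,\alpha}(K)$ for any $\alpha \in (0,1)$) then gives the required $C^{1+\alpha}$ convergence on compact sets.

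There is no real obstacle here; the lemma is a standard mollification exercise adapted to the Lipschitz-gradient setting. The only point demanding a little care is verifying that the $L^{\infty}$ bound on $D^2 u_0$ transfers pointwise to $D^2 u_0^k$ (handled by the nonnegativity of $\rho$), and that the $C^{1+\alpha}$ convergence follows from $C^1$ convergence plus the uniform $C^{1,1}$ bound via interpolation rather than from any estimate one attempts to prove directly on $D^2(u_0^k-u_0)$, which need not be small.
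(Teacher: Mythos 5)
Your proof is correct and follows essentially the same strategy as the paper's: smoothing $u_0$ by convolution with a family of approximate identities, then reading off conditions (2) and (3) from the identities $D^2u_0^k=(D^2u_0)*\rho_{1/k}$ and $D^lu_0^k=(D^2u_0)*D^{l-2}\rho_{1/k}$, and deducing (1) from local $C^1$ convergence plus the uniform $C^{1,1}$ bound via interpolation. The only difference is cosmetic: the paper mollifies with the heat kernel $K(x,y,1/k)$ while you use a compactly supported mollifier, which has the small advantage of sidestepping the (easily handled, since $u_0$ has at most quadratic growth) integrability question at spatial infinity that the heat kernel would raise.
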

 \begin{proof}
 Let $$u^k_0(x)=\int_{\R^n} u_0(y)K(x, y, 1/k)dy$$
where $K(x, y, t)$ is the standard heat kernel on $\R^n\times (0, \infty)$.    Conditions (1) and (2) and smoothness of $u^k_0$ are easily verified.  Now note that by hypothesis, $D_y^2 u_0(y)$ is a well defined and uniformly bounded function almost everywhere on $\R^n$ and that we may write 
$$
D_x^lu^k_0 (x)=\int_{\R^n} D_y^2 u_0(y)D_x^{l-2}K(x, y, 1/k)dy
$$ 
for every $l\geq 1$ from which it is easy to see that condition (3) is also true.  \end{proof}

Now consider the nonparametric mean curvature flow equation for  $f:\R^n \to \R^m$ 
 \begin{equation}\label{GMCF}
\left\{%
\begin{array}{ll}
 & \dfrac{df^{a}}{dt} =  g^{ij}(f)( f^{a})_{ij}\\
 &f(x, 0)=f_0(x)
\end{array}%
\right.
\end{equation}
where $g^{ij}(f)$ is the inverse of $g_{ij}(f):= \delta_{ij}+\sum_a f^a_i f^a_j$.  
 
 \begin{prop}\label{ap1}
 Suppose $f_0:\R^n \to \R^m$ is a smooth function such that for each $l\geq 1$, we have $\sup |D^l f_0| \leq C_{l}$ for some constant $C_l$.  Then \eqref{GMCF} has a shorttime smooth solution $f(x, t)$ with initial condition $f_0$ such that $\sup|D^l f|<\infty$ for every $l$ and $t$.
 \end{prop}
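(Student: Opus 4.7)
The plan is to prove Proposition \ref{ap1} via a standard linearization and contraction mapping argument on $\R^n$. Writing \eqref{GMCF} in the form
\begin{equation*}
\partial_t f^a = a^{ij}(Df)\,\partial_i\partial_j f^a, \qquad a^{ij}(p) := g^{ij}(p),
\end{equation*}
with $g^{ij}(p)$ the inverse of $\delta_{ij}+\sum_a p^a_i p^a_j$, the system is quasilinear and uniformly parabolic whenever $|Df|$ is bounded. Since $|Df_0|\le C_1$ by hypothesis, $a^{ij}(Df_0)\ge c\,\delta^{ij}$ for some $c>0$, and $a^{ij}$ depends smoothly on its argument. The uniform bounds $\sup|D^l f_0|\le C_l$ for every $l\ge 1$ will supply bounded smooth coefficients at every order of differentiation.

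For the iteration, fix $T>0$ small and let $X_T$ be the Banach space of continuous $v:\R^n\times[0,T]\to\R^m$ with $v(\cdot,0)=0$ and finite norm $\|v\|_{X_T}:=\sup|v|+\sup|Dv|$. For $v$ in a small ball $B_\rho\subset X_T$ (chosen so that $a^{ij}(Df_0+Dv)$ remains uniformly positive definite), the linear parabolic system for $w$,
\begin{equation*}
\partial_t w - a^{ij}(Df_0 + Dv)\,\partial_i\partial_j w = a^{ij}(Df_0+Dv)\,\partial_i\partial_j f_0,\qquad w(\cdot,0)=0,
\end{equation*}
has bounded smooth coefficients and a bounded smooth source on $\R^n\times[0,T]$. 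Standard linear parabolic theory on $\R^n$, using the fundamental solution of the operator $\partial_t - a^{ij}(Df_0+Dv)\,\partial_i\partial_j$ (which admits Gaussian bounds given the smoothness and uniform parabolicity of its coefficients), produces a unique smooth $w$ with $\|w\|_{X_T}\le C\,T^{1/2}$. Define $\Phi(v):=w$. For $T$ small enough, $\Phi$ maps $B_\rho$ into itself, and an analogous estimate leveraging the local Lipschitz dependence of $a^{ij}$ on its argument yields $\|\Phi(v_1)-\Phi(v_2)\|_{X_T}\le \tfrac12\|v_1-v_2\|_{X_T}$. Banach's fixed point theorem then delivers a solution $v\in B_\rho$, and $f:=f_0+v$ solves \eqref{GMCF} on $[0,T]$.

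To establish $\sup|D^l f|<\infty$ for every $l\ge 1$, I would bootstrap. The $l=1$ bound is automatic from $v\in X_T$. For $l\ge 2$, differentiating \eqref{GMCF} $l$ times shows that $D^l f$ satisfies a linear parabolic system whose coefficients and inhomogeneous term are polynomial in lower-order derivatives of $f$ (inductively bounded) with initial data $D^l f_0$ bounded by hypothesis; applying the maximum principle of \cite{HE} to $|D^l f|^2$ (shrinking $T$ if necessary) controls $\sup|D^l f|$ on $[0,T]$. The main obstacle is the noncompactness of $\R^n$: one must work consistently with uniform spatial norms and ensure that the linear parabolic solution operator respects them with constants independent of any artificial cutoff. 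This is addressed by using the global fundamental solution with its Gaussian decay, together with the maximum principle on $\R^n$ of \cite{HE} for the zeroth-order control needed at each bootstrap step.
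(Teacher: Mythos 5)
Your contraction-mapping scheme has a genuine gap in the choice of function space. You take $X_T$ with norm $\sup|v|+\sup|Dv|$ and define $\Phi(v)=w$ as the solution of the frozen-coefficient linear problem. To verify the contraction property, you would subtract the equations for $w_1=\Phi(v_1)$ and $w_2=\Phi(v_2)$ and obtain a linear problem for $w_1-w_2$ whose right-hand side is
\[
\bigl[a^{ij}(Df_0+Dv_1)-a^{ij}(Df_0+Dv_2)\bigr]\,\partial_i\partial_j\bigl(w_2+f_0\bigr),
\]
and the term $\partial_i\partial_j w_2$ is simply not controlled by $\|w_2\|_{X_T}$. Nor can you appeal to Schauder estimates to control $D^2 w_2$ uniformly over $B_\rho$, because $X_T$ does not encode any H\"older continuity of $Dv$, so the coefficients $a^{ij}(Df_0+Dv)$ are only $L^\infty$; for the non-divergence-form operator $a^{ij}\partial_i\partial_j$ this is not enough to produce a classical solution, let alone the Gaussian-kernel derivative bound $|D_xK|\lesssim (t-s)^{-1/2}$ that you invoke, which really belongs to the H\"older/classical theory. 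If you strengthen $X_T$ to a parabolic H\"older space $C^{2+\alpha,1+\alpha/2}$ to fix these issues, you then face the opposite problem: the H\"older norm of $w$ does not become small as $T\to0$ (the source is merely $O(1)$ in $C^{\alpha,\alpha/2}$), so the factor of $T^{1/2}$ that powered the contraction disappears.

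This is precisely the difficulty the paper's proof is engineered to avoid. The paper sets up $\Phi:B_1\to B_2$ on $C^{2+\alpha,1+\alpha/2}\to C^{\alpha,\alpha/2}$ and applies the implicit function theorem around a carefully chosen $v_0$ (the second-order Taylor polynomial in $t$ of the would-be solution), for which $w_0=\Phi(v_0)$ satisfies $w_0(x,0)=0$ and $\partial_t w_0(x,0)=0$. The time-shifted function $w_\tau$ is then $C^1$ in $t$ and close to $w_0$ in $C^{\alpha,\alpha/2}$ for $\tau$ small, so solving $\Phi(v)=w_\tau$ via the IFT yields $\Phi(v)=0$ on $[0,\tau)$. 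No smallness of a nonlinear map in $T$ is needed; only the invertibility of the linearization, which is classical. If you wish to keep the contraction route you would need to work in a stronger space and exploit the vanishing of the nonlinearity at $t=0$ (or weighted-in-time norms), which ultimately amounts to the same maneuver as the paper.

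Your bootstrap for $\sup|D^l f|<\infty$ is essentially sound in spirit and matches the paper's appeal to ``a standard bootstrapping argument,'' though as stated it relies on the shorttime solution already being smooth with bounded derivatives, which is what the corrected existence argument needs to supply.
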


 \begin{proof}
 Consider the following quasilinear parabolic system for $v:\R^n \to \R^m$
 
  \begin{equation}\label{GMCF2}
\left\{%
\begin{array}{ll}
 & \dfrac{dv^{a}}{dt} =  g^{ij}(f_0+v)( v^{a})_{ij}+ g^{ij}(f_0+v)(f_0^{a})_{ij}\\\
 &v(x, 0)=0
\end{array}%
\right.
\end{equation}
where $g^{ij}(f_0+v)$ is the inverse of $g_{ij}(f_0+v)=\delta_{ij}+\sum_a (f_0+v)^a_i (f_0+v)^a_j$.  We begin by proving \eqref{GMCF2} has a shorttime smooth solution.  The proof is based on a general implicit function theorem argument explained in \cite{H1}  (see Theorem 5.1 in \cite{H1}).

For any $k$ let $C^{k+\alpha, k/2+\alpha/2}$ be the Banach space of vector valued functions $v:\R^n \times [0, 1]\to \R^m$ which are componentwise in $C^{k+\alpha, k/2+\alpha/2}(\R^n\times[0, 1])$, where the norm on these spaces is  defined by \[\|v\|_{ C^{k+\alpha, k/2+\alpha/2}}=\max_{1\leq a\leq m} \|v^a\|_{C^{k+\alpha, k/2+\alpha/2}(\R^n\times[0, 1])}.\]   Define
\begin{equation}
 \begin{split}
 B_1&:=\{ v \in C^{2+\alpha, 1+\alpha/2}: v(x, 0)=0\}\\
  B_2&:=\{ w\in C^{\alpha, \alpha/2}\}\\
\end{split}
\end{equation}
And define the map $\Phi:B_1\to B_2$ by
\begin{equation}
[\Phi(v)]^a:=\dfrac{dv^{a}}{dt}-g^{ij}(f_0+v)( v^{a})_{ij}- g^{ij}(f_0+v)(f_0^{a})_{ij}.
\end{equation}

Then the linearization $D\Phi_{v_0}$ of $\Phi$ at any $v_0\in B_1$ corresponds to a uniformly parabolic linear system on $\R^n$ with coefficients in $C^{\alpha, \alpha/2}$  and zero initial condition, and it follows from standard linear parabolic theory that $D\Phi_{v_0}$ is an isomorphism from $T_{v_0}B_1$ to $T_{w_0}B_2$ where $w_0=\Phi(v_0)$.   Thus by the implicit function theorem, there exists $\epsilon>0$ such that $\Phi(v)=w$ has a solution $v\in B_1$ provided $\| w-w_0\|_{C^{\alpha,\alpha/2}} \leq \epsilon$. 

Now let
 \[
\begin{split}F_1(x):= &g^{ij}(f_0) (f_0)_{ij}, \\ F_2(x):= &g^{ij}(f_0) (F_1)_{ij}-g^{pi}(f_0)g^{qj}(f_0)\left[\p_p(f_0)\p_qF_1+\p_q(f_0)\p_pF_1\right](f_0)_{ij}.\end{split}
\] 
Then $F_1$ and $F_2$ are smooth functions belonging to $C^{k+\alpha, k/2+\alpha/2}$ for every $k$.  Set 
\[v_0(x, t):=tF_1(x)+\dfrac{t^2}{2} F_2(x),\]
 then $v_0\in B_1$.  It follows that
 \begin{equation}w_0=\Phi(v_0)=F_1+tF_2-g^{ij}\left(f_0+tF_1+\dfrac{t^2}{2} F_2(x)\right)\left(f_0+tF_1+\dfrac{t^2}{2} F_2(x)\right)_{ij}
\end{equation} 
and $w_0\in C^{k+\alpha, k/2+\alpha/2}$ for every $k$ and $w_0(x, 0)=0$.
We  compute 
\begin{equation}\label{2ndorder}
\begin{split}
\frac{\partial }{\partial t}w_0(x, 0)=&F_2-g^{ij}\left(f_0\right) (F_1)_{ij}\\
&+g^{pi}(f_0)g^{qj}(f_0)\left[\p_p(f_0)\p_qF_1+\p_q(f_0)\p_pF_1\right](f_0)_{ij}\\
=&0.
\end{split}
\end{equation}

Now for any $\tau \in (0, 1)$ define the function 
 \begin{equation}
w_\tau(x, t)=\left\{%
\begin{array}{ll}
 &   0, \hspace{2.1cm} t\leq \tau\\
 & w_0(x, t-\tau),\hspace{0.25cm} t>\tau
\end{array}%
\right.
\end{equation}
 Then by \eqref{2ndorder} we see that $w_{\tau} \in C^{k, 1}$ for every $k$, in particular $|D_xD_tw_\tau|\leq C$ for some constant $C$ and $w_\tau\in B_2$.   Moreover for $\tau>0$ sufficiently small we have $\| w_{\tau}-w_0\|_{C^{\alpha,\alpha/2}} \leq \epsilon$.  Thus for such a $\tau$ we have $\Phi(v)=w_{\tau}$ for some $v\in B_1$ as explained above.  But this gives $\Phi(v)=0$ for  $t\in [0, \tau)$ by the definition of $w_{\tau}$, which in turn implies $v(x, t)$ solves \eqref{GMCF2} for $t\in [0, \tau)$.  Then the smoothness of $v(x, t)$ follows from a standard bootstrapping argument applied to \eqref{GMCF2}. 
It is now easy to see that $f(x, t):=v(x, t)+f_0(x)$ provides a smooth shorttime solution  to \eqref{GMCF} with $\sup|D^lf|<\infty$ for each $l$ and $t$.\end{proof}

\begin{lem}\label{ltl1}
Let $\{ u^k_0(x)\}$ be the sequence in Lemma \ref{ltl0.9}.  Then for each $k$, \eqref{PMA} has a smooth solution $u^k(x, t)$ on $\R^n\times [0, \infty)$ with initial condition $u^k_0(x)$ such that:
 \begin{enumerate}
 
 \item $-(1-\delta)I_n\leq D^2u^k\leq (1-\delta)I_n$,  
 \item For any compact subset $S\subset \R^n \times [0, \infty)$ we have
  $$\|u^k\|_{C^{1+\alpha, \beta}(S)} \leq C_{S},$$ where $C_{S}$ is a constant depending only on $S$.  

 \item For any $l, m$ and compact subset $K\subset \R^n\times (0, \infty)$ we have
$$
 \|u^k(x, t)\|_{C^{l, m}(K)} \leq C_{l, m, \delta, K}
$$
where $C_{l, m, \delta, K}$ is a constant depending only on $l, m, \delta$ and $K$.  
  
 \item $\sup_{x\in \R^n}|D^l u^k(x,t)|^2 < C_{l,\delta} /t^{l-2} $  for all $l\geq3$ and some constant $C_{l, \delta}$ depending only on $l$ and $\delta$.
\end{enumerate}
\end{lem}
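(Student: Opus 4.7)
The plan is to combine the shorttime existence from Proposition \ref{ap1} with the a priori estimates of Sections 3 and 4. First, I apply Proposition \ref{ap1} to $f_0 := Du_0^k$, which by Lemma \ref{ltl0.9}(3) is smooth with all spatial derivatives bounded. This yields a smooth shorttime solution $f(x,t)$ of \eqref{GMCF} with $\sup_{\R^n}|D^l f(\cdot, t)|<\infty$ for each $l\geq 1$ and each $t$ in the existence interval. Since $(x, f_0)$ is a Lagrangian gradient graph and \eqref{LMCF} preserves the Lagrangian property, the evolving graph remains Lagrangian, so $f(x, t) = Du^k(x, t)$ for some smooth $u^k$ which (via Lemma \ref{pl1}, after absorbing any time-dependent additive constant) solves \eqref{PMA} on a maximal interval $[0, T_k)$ satisfying the bounded geometry condition.

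On $[0, T_k)$, Lemma \ref{L-4-1} preserves the Hessian bound $-(1-\delta)I_n \leq D^2 u^k \leq (1-\delta)I_n$, giving (1). This Hessian bound translates into $S_{ij} - \epsilon_1 g_{ij} \geq 0$ along the corresponding MCF graph for some $\epsilon_1 = \epsilon_1(\delta) > 0$, so Corollary \ref{apec1} yields $|H|^2 \leq C_\delta / t$. To prove (4), I fix $(x_0, t_0) \in \R^n \times (0, T_k)$, parabolically rescale by $\lambda = t_0^{-1/2}$ centered at $(x_0, t_0)$, and observe that on the rescaled interval $s \in [-1/2, 0]$ (corresponding to $t \in [t_0/2, t_0]$) the rescaled solution $u^k_\lambda$ has preserved Hessian bound, inherits bounded geometry from $u^k$, and satisfies $|H_\lambda|^2 = \lambda^{-2}|H|^2 \leq 2 C_\delta$. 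Lemma \ref{pmaapel3} applied with $s = -1/2$ as the new initial time then gives $|D^l_y u^k_\lambda(0,0)| \leq C_{l,\delta}$ for every $l \geq 3$; unscaling produces $|D^l u^k(x_0, t_0)|^2 \leq C_{l,\delta}^2 / t_0^{l-2}$, which is (4).

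For longtime existence, suppose $T_k < \infty$. The bounds (1) and (4), together with the initial derivative bounds from Lemma \ref{ltl0.9}(3) (which propagate to a short subinterval $[0, \tau_k] \subset [0, T_k)$ by Proposition \ref{ap1}), give uniform bounds on $|D^l u^k|$ for every $l \geq 2$ on $[0, T_k)$, with constants possibly depending on $k$ near $t=0$ but depending only on $l, \delta$ for $t$ bounded away from $0$. Standard parabolic Schauder theory applied near $T_k$ then extends the smooth solution past $T_k$, contradicting maximality; hence $T_k = \infty$. Property (3) then follows from (4) together with bootstrapping of time derivatives via \eqref{PMA}. For (2), the Hessian bound gives $Du^k$ uniformly Lipschitz in $x$, and the identity $\partial_t u^k = \sum_i \arctan \lambda_i$ combined with $|\lambda_i| \leq 1-\delta$ yields $|\partial_t u^k| \leq n\pi/4$, so $u^k$ is uniformly Lipschitz in $t$; together with the locally uniform bound on $u_0^k$ and $Du_0^k$ from Lemma \ref{ltl0.9}(1), these imply the $C^{1+\alpha,\beta}(S)$ bound for any compact $S$ and any $\alpha, \beta \in (0,1)$.

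The main obstacle is ensuring that the parabolic rescaling in the second paragraph produces constants $C_{l,\delta}$ genuinely independent of $k$ and of $(x_0, t_0)$: this requires verifying that the hypotheses of Lemma \ref{pmaapel3} (preserved Hessian bound, bounded geometry, uniformly bounded $|H|$) all hold on the rescaled time interval $s \in [-1/2, 0]$ with constants depending only on $\delta$, precisely because the rescaling isolates times where the $|H|^2 \leq C_\delta/t$ estimate has not yet degenerated. Once this is handled, the longtime extension by continuity and the Hölder estimates for (2) are essentially routine.
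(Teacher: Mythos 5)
Your outline of parts (1), (2), (3), and the longtime extension matches the paper's structure, but the argument you give for part (4) has a genuine gap. After the parabolic rescaling by $\lambda = t_0^{-1/2}$ at $(x_0,t_0)$ you invoke Lemma \ref{pmaapel3} as a black box to deduce $|D^l_y u^k_\lambda(0,0)| \leq C_{l,\delta}$, with $C_{l,\delta}$ depending only on $l$ and $\delta$. Lemma \ref{pmaapel3} does not supply such a constant: it asserts only that for a \emph{single} smooth solution with bounded geometry, preserved Hessian bound, and $|H|\leq C$, each $\sup_{\R^n}|D^l u|$ is finite on $[0,T)$. Its proof is a contradiction/blow-up argument applied to that one solution, so the resulting $C_l$ has no tracked dependence -- in particular it may depend on the particular solution (through its initial data, through $T$, etc.), not merely on $\delta$ and the $|H|$ bound. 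Verifying, as you propose, that the \emph{hypotheses} of Lemma \ref{pmaapel3} hold on $s\in[-1/2,0]$ with constants depending only on $\delta$ is not enough; the \emph{conclusion} still fails to be uniform across the family $\{u^k_\lambda\}$ indexed by $k$ and $(x_0,t_0)$.

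What the paper actually does for (4) is different in a way that closes this hole: it runs a blow-down/contradiction argument directly on the sequence $\{u^k\}$. Assuming $\sup_{x}|D^3u^k(x,t)|^2\,t$ is not bounded uniformly in $k$ and $t$, one selects $(x_k,t_k)$ realizing a near-maximum, rescales by $\lambda_k=\sqrt{\mu_k/t_k}$ with $\mu_k\to\infty$, and passes to a smooth limit $u_\infty$ of the rescaled potentials. Since $|H|^2\leq C_\delta/t$ with $C_\delta$ uniform in $k$ by Corollary \ref{apec1}, the rescaled mean curvature satisfies $|H_{\lambda_k}|^2\leq C_\delta\mu_k^{-1}\to 0$, forcing $H_\infty\equiv 0$; together with $-I_n\leq D^2 u_\infty\leq I_n$ this puts $u_\infty$ in the scope of Yuan's Bernstein theorem, so $u_\infty$ is quadratic, contradicting $|D^3_y u_\infty(0,0)|\geq 1$. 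The $l\geq 4$ cases are then handled by a similar blow-down using the already-established $l=3$ decay and Lemma \ref{pmaapel4}. You would need to carry out this argument, or prove a quantitative, $k$-uniform version of Lemma \ref{pmaapel3} by an equivalent compactness argument, before your rescaling step is justified.
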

\begin{proof} Fix some $k$.  By taking $m=n$ in Proposition \ref{ap1} we see that  \eqref{GMCF} has a shorttime smooth solution with initial condition $f_0=Du^k_0$.  It follows that \eqref{LMCF} has a smooth shorttime solution with initial condition $F(x, 0)=(x, f_0(x))$ (see for example p.19 in \cite{Br}), and thus by Lemma 2.1 that \eqref{PMA} also has a shorttime solution $u^k(x, t)$ on $\R^n\times[0, T_k)$ for some $T_k>0$ with initial condition $u^k_0$.  We can assume further that $T_k$ is the largest such time. Then by condition (2) in Lemma \ref{ltl0.9} and Lemma \ref{L-4-1}, we have $-(1-\delta)I_n\leq D^2 u^k\leq(1-\delta)I_n$ in $[0,T_k)$, and thus $|D^2 u^k|\leq n$ in $[0,T_k)$. 
  
Now we want to show $T_k=\infty.$ Suppose $T_k<\infty$.  By Proposition \ref{ap1}, $u^k(x, t)$ has bounded geometry for each $t\in [0, T_k)$, and thus so does $\tilde{F}^k(x, t)=(x, Du^k(x, t))$.  By Lemma 
\ref{pl1} it follows that the second fundamental form, and all its covariant derivatives, of the corresponding solution $F(x, t)$ to \eqref{LMCF} are uniformly bounded for each $t$.   It follows from the equation
 $$\frac{d}{dt}\tg = -2H^l h_{lij}$$
for \eqref{LMCF}  that $g_{ij}$ is equivalent to the Euclidean metric on $\R^n$ for each $t$.  Thus $F(x, t)$ has bounded geometry for each $t$, and by Corollary \ref{apec1}, $|H|^2\leq Ct^{-1}$ in $[0,T_k)$.  It follows that $|H|^2$ is bounded by a positive constant $C_k$ in $[T_k/2,T_k)$.  Thus by Lemma \ref{pmaapel3},  $|D^l u^k|\leq C_{l,k}$ in $[T_k/2,T_k)$ for all integer $l\geq 2$.  It then follows from Proposition \ref{ap1} that we can extend the solution beyond $T_k$. This contradicts the definition of $T_k$ and so $T_k=\infty$.  

We now establish the estimates in (2) and (3).  Since $|D^2u^k|\leq n$, it follows that $|\partial u^k/\partial t|\leq C(n)$ for some constant depending only on $n$.   Then as $u^k(0, 0)$ and $Du^k(0, 0)$ are uniformly bounded for all $k$, we conclude that for any compact set $S\subset \R^n\times [0, \infty)$ there exists a constant $C_S$ depending on $S$ such that
 \[\|u^k\|_{C^{1+\alpha, \beta}(S)} \leq C_{S}.\]  Now by Corollary \ref{apec1}, $|H|^2\leq Ct^{-1}$ for some $C$ depending only on $\delta$.   Thus by Lemma \ref{pmaapel3}, for any $T>0$ we have that $|D^lu^k|\leq C_{\delta, T}$ on $\R^n \times [T, \infty)$ for all $l\geq 3$. Consequently $|\partial^m u^k/\partial ^m t|$ is likewise uniformly bounded on $\R^n \times [T, \infty)$ by a constant depending only on $m$ and $C_{\delta, T}$ for $m\geq 0$.  Then for any compact set $K\subset \R^n\times(0, \infty)$, there exists some constant $C_{l, m, \delta, K}$ depending on $l, m, \delta, K$ such that 
\[
 \|u^k(x, t)\|_{C^{l, m}(K)} \leq C_{l, m, \delta, K}.
\]
  
Finally,  we show the decay estimates (4)  holds.  We will first prove it for $l=3$. This amounts to show  that $\sup_{\R^n}|D^3 u^k|^2t\leq C$ for $t\rightarrow \infty$ where $C$ is independent of $k$.  Assume this is not the case. Since $\sup_{\R^n} |D^3 u^k|$ is finite
for each $t$ as $u^k$ satisfies the bounded geometry condition, there exist $t_k >0$ such that
$$
\sup_{x\in\R^n, t\leq t_k} |D^3 u^k(x,t)|^2 t_k = \sup_{x\in\R^n} |D^3  
u^k(x,t_k)|^2 t_k := 2\mu_k \to \infty
$$
as $k\to\infty$. For each $k$,  there exists  
$x_k\in\R^n$ such that
$$
|D^3 u^k(x_k,t_k)|^2 t_k \geq \mu_k.
$$
 Let $(y, Du_{\lambda_k}(y, s))$ the parabolic scaling of the graph $(x, Du^k(x, t))$ for $t\leq t_k$ by $\lambda_k=\sqrt{\mu_k/t_k}$ at $(x_k, t_k)$ for each $k$.
By the same argument as in the proof of  Lemma \ref{pmaapel3}, for each $k$, $ u_{\lambda_k}(y, s)$ is still a solution of \eqref{PMA} for $s\in [-\mu_k, 0]$ with 
$$
D^2_{y}  u_{\lambda_k}=D^2_x u^k, \hspace{12pt} |D^3_{ y}  u_{\lambda_k}|\leq \sqrt{2}
$$ 
on $\R^n \times  [-\mu_k, 0]$, hence for any fixed $\mu<\mu_k$, all higher derivatives of $u_{\lambda_k}$ are uniformly bounded on
$\R^n \times  [-\mu, 0]$ for all sufficiently large $k$, by positive constants independent of $k$ by Lemma \ref{pmaapel4}.  Note that  $u_{\lambda_k}(0, 0)=0$ and $D_{y} u_{\lambda_k}(0, 0)=0$, it then follows that $ u_{\lambda_k}(y, s)$ converges subsequentially on each compact set on ${\mathbb R}^n\times (-\infty,0]$ to a solution $ u_\infty$ of \eqref{PMA} such that
\[
 -(1-\delta)I_n \leq D^2_{y} u_\infty \leq(1-\delta)I_n
 \]
 and
 \[
 |D^3_{y} u_\infty (0, 0)|\geq1. 
\]
Note that for each $u^k(x, t)$ we have $|H|^2t\leq C$ for all $t$ and some $C$ independent of $k$. Thus after scaling, for any fixed $\mu<\mu_k$ we have $|H_{\lambda_k}|^2\leq C\mu_k^{-1}\rightarrow 0$ uniformly on $\R^n \times [-\mu, 0]$  for all sufficiently large $k$. It follows that $H_\infty=0$ and so $(y, D_{y} u_\infty)$ is a special Lagrangian graph. But this contradicts the Bernstein theorem  for special Lagrangian graphs in \cite{Y}. So $|D^3u^k|^2 t\leq C$ for any $k$ and $t$.  

For $l\geq 4$  we use a blowdown argument. This follows essentially from Lemma \ref{pmaapel4} as we have shown that $|D^2u^k|$ and $ |D^3u^k|$ are bounded. Suppose that $|D^lu^k|t^{l-2}$ were not bounded.  Then there would exist $(x_k, t_k)$
such that $|D^lu^k|^2(x_k, t_k) t_k^{l-2}\geq \mu_k\rightarrow \infty$. Similarly we can pick $t_k$  such that $\sup_{t\leq t_k, x\in \R^n} |D^lu^k|^2 t^{l-2} \leq 2\mu_k$. 
Now let $\lambda_k=(t_k^{l-2}/\mu_k)^{\frac{1}{2(2-l)}}$ and let $(y, Du_{\lambda_k}(y, s)$  be the parabolic scaling of $(x, Du^k(x, t))$ for $t_k/2\leq t\leq t_k$ by $\lambda_k$ at $(x_k, t_k)$ for each $k$. 
After scaling,  $(y, D u_{\lambda}(y, s))$ is a solution of \eqref{PMA} for $s\in [-1/2, 0]$, and we have
\[
D^2_{y} u_{\lambda_k}=D^2_x u^k
\]
and
\begin{equation}\nonumber
\begin{split}
 |D^3_{y} u_{\lambda_k}( \cdot,s)|^2=&\lambda_k^{-2} |D^3_xu^k(\cdot, t)|^2\\=&\mu_k^{\frac{1}{2-l}}\left(|D^3u^k( \cdot,t)|^2t_k \right)\\
 \leq & 2\mu_k^{\frac{1}{2-l}}\left(|D^3u^k( \cdot,t)|^2t \right)\rightarrow 0 
 \end{split}
\end{equation}
as $k\to\infty$.
By Lemma \ref{pmaapel4}, all the higher derivatives of $u_{\lambda_k}$ are uniformly bounded.  Then as before, with $ u_{\lambda_k}(0, 0)=D_{y} u_{\lambda_k}(0, 0)=0$ we conclude that $u_{\lambda_k}$ converges subsequentially on compact sets to a solution $u_\infty$ of \eqref{PMA}. 
But again, $|D^3 u_\infty|=0$ everywhere contradicts $|D^l u_\infty (0, 0)|\geq1$. \end{proof}

\section{Longtime existence and convergence}

We are now ready to prove Theorems \ref{entire} and \ref{compact}.    

\begin{proof} [Proof of Theorem \ref{entire}]
Let $u_0(x)$ be as in Theorem \ref{entire}.  Let $\{u^k_0\}$ be a sequence of approximation of $u_0$ as in Lemma \ref{ltl0.9}. By Lemma \ref{ltl1}, for each $k$ we have a longtime solution $u^k(x, t)$ to \eqref{PMA}.

For any fixed positive $R,T,\epsilon$, by (2) and (3) in Lemma  \ref{ltl1}, there exists a subsequence of $\{u^k\}$, which we still denote by $\{u^k\}$, such that for some $u_{R, T}$ on $\overline{B_R(0)}\times[0,T]$ we have:
\begin{enumerate}
\item 
$
u^k\rightarrow u_{R,T}
$
in $C^{1+\alpha,\beta}(\overline{B_R(0)}\times[0,T])$ for any $\alpha,\beta\in(0,1)$ with
$$
\left| u_{R,T}\right|_{C^{1+\alpha,\beta}}\leq C(R,T)
$$
\item
$
u^k \rightarrow  (u_{R,T})|_{[\epsilon,T]} 
$
in $C^{l,m}(\overline{B_R(0)}\times [\epsilon,T])$ for any $l,m$ with
$$
\left| u_{R,T,\epsilon}\right|_{C^{l,m}}\leq C(l,m,R,T,\epsilon)
$$
\end{enumerate}
Then letting $R\to\infty,T\to\infty,\epsilon\to 0$ and using a diagonal subsequence argument, we may conclude that $\{u^k\}$ has a convergent subsequence converging 
on every compact subset of ${\mathbb R}^n\times[0,\infty)$ to a solution $u$ of (\ref{PMA}) which is smooth on ${\mathbb R}^n\times(0,\infty)$ and $\lim_{t\to 0} u(x,t)=u_0(x)$ for all $x\in{\mathbb R}^n$. In particular, we know that $u(x, t)\rightarrow u_0(x)$ in $C^{1+\alpha}(S)$ for each compact set $S\subset \R^n$ when $t\rightarrow 0.$ It is then not hard to see from Lemma 5.2 that $u(x, t)$ satisfies conditions (1)-(3) of Theorem \ref{entire}.  

As $t\rightarrow \infty$, we know that $|D^3u|$ (or the second fundamental form) decays at the rate $C/\sqrt{t}$. But in general, $u(x, t)$ may not converge as $t\rightarrow \infty$ since the graph $(x, Du(x, t))$ could move on the order $1/\sqrt{t}$. See \cite{HE} for the discussion for codimension one case.  If we assume in addtion $|Du_0(x)|\rightarrow 0$ for $|x|\rightarrow \infty$, then by Lemma \ref{pmaapel2} we know that $|Du(x, t)|$ stays bounded. 
Consider the equation for $u_k$
\[
\frac{\partial u_k}{\partial t}=\tilde g^{ij} u_{kij}
\]
As $|D^2u|$ is uniformly bounded thus so is $\tilde g^{ij}$ and the above equation is uniformly parabolic. We can then use the theorem  in \cite{I} to obtain that $Du(x, t)\rightarrow 0$ when $t\rightarrow 0$. 
\end{proof}

\begin{proof} [Proof of Theorem \ref{compact}]
 The convergence in the theorem follows from the estimates in Theorem \ref{entire} and the results in \cite{Si}.
\end{proof}
\section{Bernstein type theorem for translating solitons}
 We now prove Theorem \ref{bernstein}.  
 
 \begin{proof}
 Recall that an entire Lagrangian translating soliton graph $(x, Du(x))$ satisfies 

\begin{equation}\label{E-soliton-1.2}
\sum_i\arctan \lambda_i+\sum_i a_i\frac{\partial u}{\partial x_i}-\sum_ib_ix_i=c.
\end{equation}

\noindent{\bf Step 1.} If $|D^3u|$ is uniformly bounded, then $|D^lu|$ is uniformly bounded for all $l\geq 4$.

To see this, differentiating  \eqref{E-soliton-1.2} we get 
\begin{equation}\label{E-soliton-2}
g^{ij}u_{ijk}=-a_iu_{ik}+b_k,
\end{equation}
where $g_{ij}=\delta_{ij}+u_{ik}u_{kj}$ is the induced metric by $(x, Du(x))$.  Recall that $|D^2u|\leq n$.  A standard bootstrapping argument shows that if $|D^2u|, |D^3u|$ are bounded, then $|D^lu|$ is bounded for all $l\geq 4$. 

\vspace{15pt}
\noindent{\bf Step 2.}  $|D^3u|$ is uniformly bounded.

We first observe that $|H|$ is uniformly bounded. To see this, note that $|D^2u|\leq n$ and thus $g_{ij}$ is equivalent to the standard Euclidean metric.  Also $H\equiv T^{\bot}$ and $T$ is a constant vector from which it follows that $|H|$ is bounded.

 Suppose that $|D^3u|$ is not bounded.  Then as $u$ is smooth, there exists $x_k\in \mathbb{R}^n$ such that $|D^3u(x_k)|\rightarrow \infty$ while $|x_k|\rightarrow \infty$. 

{\bf Claim}  There exists a sequence $\{y_k\}$ such that for each $k$: $y_k\in B_{x_k}(2)$ and  for some nonnegative integer $n_k$,  $|D^3u(y_k)|=2^{n_k}|D^3u(x_k)|$ and $|D^3u(y)|\leq 2|D^3u(y_k)|$ for any $y\in B_{y_k}(2^{-n_k})$.

 We establish the claim by using Perelman's point selection technique \cite{P}.  Fix any $x_k$.  If  $|D^3u(x)|\leq 2|D^3u(x_k)|$ for all $x\in B_{1}(x_k)$ then the claim holds for $x_k$.  If this is not the case, however, there must exist $x^1_k\in B_{1}(x_k)$ such that $|D^3u( x^1_k)|=2|D^3u(x_k)|$ by the intermediate value theorem.   Suppose that $x^l_k$ is constructed as above for some $l\geq 1$ while $|D^3u(x)|\leq 2|D^3u(x^l_k)|$ is not satisfied for some $x\in B_{2^{-l}}(x^l_k)$.  Then as before there exists $x^{l+1}_k\in B_{2^{-l}}(x^l_k)$ such that  $|D^3u(x^{l+1}_k)|=2|D^3u(x^l_k)|.$ Clearly, the sequence $\{x^l_k\}$ satisfies $|D^3u(x^l_k)|=2^l|D^3u(x_k)|$ and $d(x^l_k, x_k)\leq \sum_{i=0}^{l} 1/2^i\leq 2$ for each $l$.   Since $u$ is smooth and $|D^3u(x_k)| \geq 1$ for $k$ sufficiently large, this process can generate at most a finite number of points $x_k^1, x_k^2,...,x_k^{n_k}$.  The claim is established by letting $y_k=x_k^{n_k}$ for each $k$.

 For each $k$, in the notation above we have $y_k\in B_{x_k}(2)$ and  $|D^3u(y_k)|=2^{n_k}|D^3u(x_k)|$ and $|D^3u(y)|\leq 2|D^3u(y_k)|$ for all $x\in B_{y_k}(2^{-n_k})$. Denote $|D^3u(x_k)|=\mu_k$ and $\lambda_k=|D^3u(y_k)|=2^{n_k}\mu_k$.   We consider a sequence of scalings of $(x, Du(x))$ as follows,
 \begin{equation}
 \begin{split}
 z&=\lambda_k(x-y_k)\\
 u_{\lambda_k}(z)&=\lambda_k^2 [u(x)-u(y_k)-Du(y_k) \cdot (x-y_k)]
\end{split}
 \end{equation}
 After scaling, $u_{\lambda_k}(0)=D_zu_{\lambda_k}(0)=0,$ and for any integer $l\geq 1$,
 \[
 D^l_zu_{\lambda_k}=\lambda^{2-l}_k D^l_xu.
 \]
Moreover  the graph $(x, Du)$ over $B_{y_k}(2^{-n_k})$ becomes the graph $(z, D_zu_{\lambda_k})$ over $B_0(\mu_k)$. Also we have $H_{\lambda_k}=\lambda_k^{-1}H$ and $|D^3_zu_{\lambda_k}|\leq 2$ for all $z\in B_0(\mu_k)$.  By \eqref{E-soliton-2}, it is clear that
 \begin{equation}\label{E-soliton-3}
 (g_{\lambda_k})^{ij} (u_{\lambda_k})_{ijk}=\lambda_k^{-1}(a_i (u_{\lambda_k})_{ik}-b_i),
 \end{equation}
 where $(g_{\lambda_k})_{ij}=\delta_{ij}+(u_{\lambda_k})_{ik}(u_{\lambda_k})_{kj}.$
By a standard bootstrapping argument, for any $z\in B_0(\mu_k-1)$, all higher derivatives of $u_{\lambda_k}(z)$ are uniformly bounded since $|D^3_zu_{\lambda_k}|\leq 2$.  This combined with $u_{\lambda_k}(0)=D_zu_{\lambda_k}(0)=0$ give a bound on $\|u_{\lambda_k}\|_{C^l}$ for each $l$ which is independent of $k$ and uniform for each compact set in $\R^n$ since $\mu_k\rightarrow \infty$.   Then by the Arzela-Ascoli theorem $u_{\lambda_k}$ converges subsequentially on any compact subset of $\mathbb{R}^n$ to a smooth function $u_\infty$ on $\mathbb{R}^n$.  Since $H_{\lambda_k}=\lambda_k^{-1}H$ and $H$ is uniformly bounded for $(x, Du)$, then $|H_\infty|\equiv 0$ for $(z, D_zu_\infty)$, and so $(z, D_zu_\infty)$ is a special Lagrangian graph. We know that
\[
-I_n\leq D^2u_\infty\leq I_n.
\]
 But  the Bernstein theorem in \cite{Y} contradicts $|D^3u_\infty(0)|=1$.  The completes Step 2. 

\vspace{15pt}
\noindent{\bf Step 3.} The second fundamental form $A\equiv0.$

 Let $F(x, t)$ be the translating solution to \eqref{LMCF} generated by $(x, Du_0(x))$, and let $u(x, t)$ be the corresponding solution to $\eqref{PMA}$ with initial condition $u_0$.   Then as $F(x, 0)$ has bounded geometry by Steps 1 and 2, it follows that $F(x,t)$ does as well for each $t$, and in particular $\sup_{x\in\R^n}|A(x, t)|$ is constant in $t$.  On the other hand, by the proof of Lemma 5.2, $u(x, t)$ must then satisfy $$\sup_{x\in \R^n} |D^3 u(x, t)|^2 \leq C/t$$ for all $t$ and some constant $C$ depending only on $\delta$.  It follows that  $\sup_{x\in\R^n}|A(x, t)|=0$ for all $t$, and thus $(x, Du_0(x))$ must be a flat plane. \end{proof}

  \bibliographystyle{amsplain}

\end{document}